\documentclass[12pt,oneside,reqno]{amsart}
\usepackage{amsmath,amsthm,amssymb}
\usepackage[pagewise]{lineno}
\usepackage[utf8]{inputenc}
\usepackage[T1]{fontenc}
\usepackage{hyperref}
\usepackage{enumerate}
\usepackage{bbm}
\usepackage{todonotes}
\usepackage{mathrsfs}
\usepackage{color}
\usepackage{nicefrac}
\usepackage{fancyhdr}
\usepackage{centernot}
\usepackage[a4paper]{geometry}
\geometry{
 a4paper,
 total={170mm,257mm},
 left=20mm,
 top=20mm,
 }
\usepackage{cleveref}

\usepackage[sort&compress,numbers]{natbib}
\newtheorem{Th}{Theorem}[section]
\newtheorem{Prop}[Th]{Proposition}
\newtheorem{Lemma}[Th]{Lemma}
\newtheorem{Cor}[Th]{Corollary}
\theoremstyle{definition}
\newtheorem{Remark}[Th]{Remark}
\newtheorem{Def}[Th]{Definition}
\newtheorem{Example}[Th]{Example}
\newtheorem{Question}[Th]{Question}
\makeatletter
\newcommand{\customlabel}[2]{%
   \protected@write \@auxout {}{\string \newlabel {#1}{{#2}{\thepage}{#2}{#1}{}} }%
   \hypertarget{#1}{#2}
}
\makeatother
\newcommand{\divides}{\mid}
\newcommand{\ndivides}{\nmid}
\newcommand{\n}{\mathbf{n}}

\newcommand{\cf}{{\mathcal{F}}}

\newcommand{\Q}{\mathbb{Q}}
\newcommand{\R}{{\mathbb{R}}}

\newcommand{\C}{{\mathbb{C}}}
\newcommand{\Z}{{\mathbb{Z}}}
\newcommand{\N}{{\mathbb{N}}}

\newcommand{\HH}{\mathbb{H}}
\newcommand{\G}{\mathbb{G}}
\newcommand{\vep}{\varepsilon}

\newcommand{\sB}{\mathscr{B}}
\newcommand{\raz}{\mathbbm{1}}

\newcommand{\OK}{\mathcal{O}_K}

\makeatletter
\def\moverlay{\mathpalette\mov@rlay}
\def\mov@rlay#1#2{\leavevmode\vtop{%
   \baselineskip\z@skip \lineskiplimit-\maxdimen
   \ialign{\hfil$\m@th#1##$\hfil\cr#2\crcr}}}
\newcommand{\charfusion}[3][\mathord]{
    #1{\ifx#1\mathop\vphantom{#2}\fi
        \mathpalette\mov@rlay{#2\cr#3}
      }
    \ifx#1\mathop\expandafter\displaylimits\fi}
\makeatother

\rhead{Prox. of multidim. $\mathscr{B}$-free systems}
\begin{document}
\title{Proximality of multidimensional $\mathscr{B}$-free systems}
\author{Aurelia Dymek}
\subjclass[2010]{Primary 54H20; Secondary 37B10}
\keywords{multidimensional $\mathscr{B}$-free subshifts, proximality, lattices, number fields}
\begin{abstract}We characterize proximality of multidimensional $\mathscr{B}$-free systems in the case of number fields and lattices in $\Z^m$, $m\geq2$.
\end{abstract}
\maketitle
\section{Introduction}
Let $(T_g)_{g\in\mathbb{G}}$ be an action of a~countable Abelian group $\mathbb{G}$ by homeomorphisms on a~compact metric space $(X,D)$. The pair $(X,(T_g)_{g\in\G})$ is called a \emph{topological dynamical system}. Two, mutually complementary, basic concepts of topological dynamics are distality and proximality. Recall that a pair $(x,y)$ of two different points from $X$ is called \emph{distal} if $\liminf_{g\to\infty}D(T_gx,T_gy)>0$, otherwise $(x,y)$ is called \emph{proximal}. If any pair of distinct points in $X$ is distal (respectively, proximal) then $(X,(T_g)_{g\in\G})$ is called {\em distal} (respectively, \emph{proximal}). 
In the minimal case, distality is rather well understood by the structural result of Furstenberg \cite{Fu}. While in general distal systems can display complicated dynamics, we are interested only in subshifts, i.e., closed subsets $X\subseteq \{0,1\}^{\G}$ invariant under the action by shifts $T_h((x_g)_{g\in\G})=(x_{g+h})_{g\in\G}$, $h\in \G$. Since any two points $x,y$ whose all shifts remain close must be equal, it follows immediately that any distal subshift is finite.
On the other hand, unless $(X,(T_g)_{g\in\G})$ is minimal (see Lemma \ref{fixed} below), the proximality of subshifts is far from being understood, for some results see \cite{MR3182747,MR3114317}. 
 
Denote by $Prox\subseteq X\times X$ the set of all proximal pairs.  The relation $Prox$ is reflexive, symmetric, $(T_g)_{g\in\mathbb{G}}$-invariant, but, in general, is not transitive. In order to obtain an equivalence relation, a stronger notion than proximality is needed. The pair $(x,y)\in X\times X$ is called {\em syndetically proximal} if for any $\vep>0$ the set $\{g\in\G\ : \ D(T_gx,T_gy)<\vep\}$ is syndetic and we write $(x,y)\in SynProx$. Recall that a subset $A\subseteq\G$ is syndetic if there exists a finite subset $F\subseteq\G$ such that $A+F:=\{a+f\ : \ a\in A, f\in F\}=\G$. Clay~\cite{Cl} proved that $SynProx$ is an equivalence relation and Wu \cite{Wu} showed that if $Prox$ is transitive then $Prox=SynProx$. So, $Prox$ is an equivalence relation if and only if $Prox=SynProx$.
It follows that $(X,(T_g)_{g\in\mathbb{G}})$ is proximal if and only if $(X,(T_g)_{g\in\mathbb{G}})$ is syndetically proximal.

In this paper, we study proximality of generalizations of $\mathscr{B}$-free systems \cite{Ab-Le-Ru, AD-SK-JKP-ML, MR3356811, JKP-ML-BW, JK-MK-DK, Me}. Let $\mathscr{B}\subseteq\N$. Integers with no factors in $\mathscr{B}$ are called \emph{$\mathscr{B}$-free numbers} and are denoted by $\mathcal{F}_\mathscr{B}$. Such sets were studied already in the 30's by Behrend, Chowla, Davenport, Erd\H{o}s, Schur and others, see \cite{MR1414678}. Note that, if $\mathscr{S} =\{p^2 : p\text{ is prime}\}$ then $\raz_{\mathcal{F}_\mathscr{S}}=\mu^2$, where $\mu\colon\Z\to\C$ is the Möbius function given by the following formula:
\begin{equation*}
\mu(n)=\begin{cases}
1, \text{ if }|n|=1,\\
(-1)^t, \text{ if } |n| \text{ is the product of }t \text{ distinct primes},\\
0, \text{ otherwise.}
\end{cases}
\end{equation*}
The dynamical approach to study $\mathscr{B}$-free systems is rather new.
\begin{enumerate}[(I)]
\item\label{(I)} $\mathscr{B}$-free systems
 
In 2010, Sarnak in his seminal paper \cite{sarnak-lectures} proposed to study the dynamical systems determined by $\mu$ and $\mu^2$. In either case, we consider the closure $X_\eta$ of the orbit of $\eta=\mu\in\{-1,0,1\}^\Z$ or $\eta=\mu^2\in\{0,1\}^\Z$ under the left shift $S$.  
The dynamics of $(X_\mu,S)$ is complicated and there are many open questions related to it, see, e.g., \cite{SF-JKP-ML}. The system $(X_{\mu^2},S)$ (called \emph{square-free system}) which is a topological factor of $(X_\mu,S)$ via the map $(x_n)_{n\in\Z}\mapsto(x_n^2)_{n\in\Z}$ is simpler to study.
Similarly, given $\sB\subseteq\N$, taking the closure of the orbit of $\eta=\raz_{\cf_\sB}\in\{0,1\}^\Z$ under the left shift, yields a $\sB$-free system. At first, $\mathscr{B}$-free systems were studied in the \emph{Erd\H{o}s case}, i.e., for $\mathscr{B}$ infinite, pairwise coprime, with $\sum_{b\in\mathscr{B}}\frac{1}{b}<\infty$ \cite{Ab-Le-Ru,MR3356811, JKP-ML-BW, JK-MK-DK, Me}. Theorem 8 in \cite{sarnak-lectures} gives proximality of the square-free system, cf. also \cite{Ab-Le-Ru}.
The general case is considered in~\cite{AD-SK-JKP-ML} and the following characterization of the proximality is given:
\begin{Th}[{\cite[Theorem 3.7]{AD-SK-JKP-ML}}]\label{one-dim}
 Let $\mathscr{B}\subseteq\N\setminus\{1\}$ and $\eta=\raz_{\mathcal{F}_\mathscr{B}}$. The following conditions are equivalent:
\begin{enumerate}[(a)]
\item $(X_\eta, S)$ is proximal,\label{(a)}
\item $\ldots0.00\ldots\in X_\eta$,
\item for any choice of $q_1,\ldots, q_s > 1$, $s\geq1$, we have $\mathscr{B}\not\subseteq\bigcup_{i=1}^s q_i\Z$,\label{(c)}
\end{enumerate}
\customlabel{(d)}{(d)} $\mathscr{B}$ contains an infinite subset of pairwise coprime integers,
\begin{enumerate}[(a)]
\setcounter{enumii}{4}
\item $\mathcal{F}_\mathscr{B}$ does not contain an infinite arithmetic progression.
\end{enumerate}
\end{Th}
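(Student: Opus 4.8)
The plan is to prove the five conditions equivalent through the cycle $(a)\Rightarrow(b)\Rightarrow(c)\Rightarrow(d)\Rightarrow(a)$, attaching $(e)$ by way of $(d)\Rightarrow(e)$ and $\neg(c)\Rightarrow\neg(e)$. Apart from $(a)\Rightarrow(b)$ and $(d)\Rightarrow(a)$ the steps are elementary number theory (one may assume $\mathscr{B}\neq\emptyset$). For $(a)\Rightarrow(b)$, fix $b\in\mathscr{B}$: among any $b$ consecutive integers one is divisible by $b$, hence not $\mathscr{B}$-free, so the word $1^b$ never occurs in $\eta$ and $\ldots1.11\ldots\notin X_\eta$. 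A proximal system has a unique minimal subset, which by Lemma~\ref{fixed} is a single fixed point; since the only fixed points of a subshift over $\{0,1\}$ are the two constant sequences, that subset must be $\{\ldots0.00\ldots\}\subseteq X_\eta$, which is $(b)$.

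For $(b)\Rightarrow(c)$ I would contrapose. If $\mathscr{B}\subseteq\bigcup_{i=1}^{s}q_i\Z$ with all $q_i>1$ and $L=\lcm(q_1,\dots,q_s)$, then no $n\equiv1\pmod L$ can be divisible by any $b\in\mathscr{B}$: choosing $i$ with $q_i\divides b$ would give $q_i\divides n$ and $q_i\divides L\divides n-1$, so $q_i\divides 1$. Hence $1+L\Z\subseteq\mathcal{F}_\mathscr{B}$; this is an infinite arithmetic progression of $\mathscr{B}$-free numbers (so $\neg(c)\Rightarrow\neg(e)$), and it shows $\mathcal{F}_\mathscr{B}$ meets every interval of length $L$, so $\eta$ has no word $0^{L}$ and $\ldots0.00\ldots\notin X_\eta$. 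For $(c)\Rightarrow(d)$, contrapose again: if $\mathscr{B}$ has no infinite pairwise coprime subset but $\mathscr{B}\not\subseteq\bigcup_{p\in P}p\Z$ for every finite set of primes $P$, then a greedy construction (take $b_1\in\mathscr{B}$, let $P_1$ be its prime divisors, use $\mathscr{B}\not\subseteq\bigcup_{p\in P_1}p\Z$ to get $b_2$ coprime to $b_1$, enlarge $P_1$ and repeat) yields an infinite pairwise coprime subset of $\mathscr{B}$, a contradiction; so $\mathscr{B}\subseteq\bigcup_{p\in P}p\Z$ for some finite $P$, which is $\neg(c)$.

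The core of the argument is $(d)\Rightarrow(a)$. I would first record the structural fact that every $x\in X_\eta$ omits some residue class modulo each $b\in\mathscr{B}$: $\supp(\eta)=\mathcal{F}_\mathscr{B}$ omits $0\bmod b$, hence $\supp(S^{k}\eta)$ omits $-k\bmod b$, and if $x=\lim_{j}S^{k_j}\eta$ then, passing to a subsequence on which $-k_j$ is a fixed residue $r\bmod b$, one gets $x_m=0$ for all $m\equiv r\pmod b$. Fix now an infinite pairwise coprime family $\{b_j\}_{j\geq1}\subseteq\mathscr{B}$ provided by $(d)$, take $x,y\in X_\eta$ and $N\geq0$, and let $r^{x}_j,r^{y}_j$ be residues omitted by $x,y$ modulo $b_j$. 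Since $b_1,\dots,b_{4N+2}$ are pairwise coprime, the Chinese Remainder Theorem provides arbitrarily large $n$ with
\[
n-N+k-1\equiv r^{x}_{k}\pmod{b_{k}}\quad\text{and}\quad n-N+k-1\equiv r^{y}_{2N+1+k}\pmod{b_{2N+1+k}}\qquad(1\leq k\leq2N+1),
\]
so that $x$ and $y$ both vanish on $[n-N,n+N]$; then $S^{n}x$ and $S^{n}y$ agree with $\ldots0.00\ldots$ on $[-N,N]$, and letting $N\to\infty$ along such $n$ gives $\liminf_{n\to\infty}D(S^{n}x,S^{n}y)=0$, i.e.\ $(X_\eta,S)$ is proximal. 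Finally, for $(d)\Rightarrow(e)$: if $\{a+kd:k\geq0\}\subseteq\mathcal{F}_\mathscr{B}$, then $b\nmid a+kd$ for all $k$ forces $\gcd(b,d)\nmid a$ for every $b\in\mathscr{B}$, whereas $\gcd(b_j,d)=1$ for all but finitely many $j$ (only finitely many primes divide $d$), and $1\divides a$ — a contradiction.

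I expect $(d)\Rightarrow(a)$ to be the main obstacle: the two ingredients — that an arbitrary point of $X_\eta$ (not merely $\eta$ itself) omits a residue modulo each $b\in\mathscr{B}$, and that pairwise coprimality then lets one force a \emph{single} long run of zeros simultaneously in two given points — are where the real work lies. The remaining implications reduce to the Chinese Remainder Theorem, a greedy selection, and the structure theory of proximal flows invoked in $(a)\Rightarrow(b)$.
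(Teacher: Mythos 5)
Your proof is correct (modulo the degenerate case $\mathscr{B}=\emptyset$, which you rightly set aside), but note that the paper does not actually prove Theorem \ref{one-dim}: it quotes it from \cite{AD-SK-JKP-ML} and instead proves the generalization Theorem \ref{ideals_general}, which recovers it for $m=1$, $\OK=\Z$; the fair comparison is therefore with that proof specialized to $\Z$. Your cycle (a)$\Rightarrow$(b)$\Rightarrow$(c)$\Rightarrow$(d)$\Rightarrow$(a) and the greedy prime-avoidance argument for (c)$\Rightarrow$(d) match the paper's scheme, and your elementary versions of (a)$\Rightarrow$(b) (the word $1^b$ never occurs) and of (b)$\Rightarrow$(c) (the progression $1+L\Z$ lies in $\mathcal{F}_\mathscr{B}$ and forces a $1$ in every window of length $L$) are one-dimensional shortcuts for the paper's F\o lner/coset arguments built on Lemma \ref{Folner_prod}. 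The genuine divergence is in (d)$\Rightarrow$(a). The paper never touches arbitrary pairs $(x,y)$: it uses the Chinese Remainder Theorem only to show that $\eta$ itself admits one window of zeros (i.e.\ $W_{\eta,F_n}\neq\emptyset$), upgrades this to syndeticity of $W_{\eta,F_n}$ by intersecting finitely many finite-index subgroups (Lemma \ref{d->a}), transfers syndeticity from the transitive point $\eta$ to every $x\in X_\eta$ (Lemma \ref{syndetic1}), and concludes via the Clay--Wu machinery (Corollary \ref{pr:4}). You instead establish the structural fact that every $x\in X_\eta$ vanishes on a full residue class modulo each $b\in\mathscr{B}$ and then apply CRT to $4N+2$ pairwise coprime moduli to synchronize a common block of zeros in two given points. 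Your route is self-contained and bypasses the syndetic-proximality apparatus entirely, at the cost of the limit-point lemma and a doubled CRT; the paper's route concentrates all the work in a single statement about $\eta$ alone, which is precisely what lets it scale to $\OK^m$ and to the lattice setting, where your pairwise-coprimality input is unavailable (conditions (b) and (d) are no longer equivalent there, cf.\ Theorem \ref{general_lattices}). Your attachment of (e) via (d)$\Rightarrow$(e) and $\neg$(c)$\Rightarrow\neg$(e) also differs from the paper's (b)$\Rightarrow$(e)$\Rightarrow$(c) in Theorem \ref{ideals_general}, but both are routine and correct.
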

Recently, Kasjan, Keller and Lemańczyk considered $\mathscr{B}$-free systems as weak model sets \cite{SK-GK-ML} and characterized the proximality of $\mathscr{B}$-free systems by a property of associated window, see also \cite{GK}.
\end{enumerate}
Since $b\Z$ (for $b\in\Z$) is simultaneously a lattice and an ideal in $\Z$, one dimensional Erd\H{o}s case has two natural generalizations:
\begin{enumerate}[(I)]
\setcounter{enumi}{1}
\item\label{(II)} $\mathscr{B}$-free systems in lattices

Baake and Huck in their survey \cite{Baake:2015aa} define $\mathscr{B}$-free lattice points in a lattice $\Lambda\subseteq\R^m$, $m\geq2$ by the formula $\mathcal{F}_\mathscr{B}:=\Lambda\setminus\bigcup_{b\in\mathscr{B}}b\Lambda$,
where $\mathscr{B}\subseteq\N$ is an infinite pairwise coprime set. For $\eta=\raz_{\mathcal{F}_\mathscr{B}}\in\{0,1\}^{\Lambda}$ they consider its orbit closure $X_\eta$ under the multidimensional shift $(S_\lambda)_{\lambda\in\Lambda}$. The system $(X_\eta,(S_\lambda)_{\lambda\in\Lambda})$ is called a \emph{$\mathscr{B}$-free system}. They prove that these $\mathscr{B}$-free systems are proximal.
\item\label{(III)} $\mathfrak{B}$-free systems in number fields

Baake and Huck \cite{Baake:2015aa} also define $\mathfrak{B}$-free integers in number fields which generalizes the case studied by Cellarosi and Vinogradov \cite{FC-IV} and \eqref{(II)}. Given a finite
extension $K$ of $\Q$, with the ring of integers $\mathcal{O}_K$, they set $\mathcal{F}_\mathfrak{B}:= \mathcal{O}_K\setminus\bigcup_{\mathfrak{b}\in\mathfrak{B}}\mathfrak{b}$,
where $\mathfrak{B}$ is an infinite pairwise coprime collection of ideals in $\mathcal{O}_K$ with $\sum_{\mathfrak{b}\in\mathfrak{B}}
\frac{1}{|\mathcal{O}_K/\mathfrak{b}|}<\infty$. Similarly as above, for $\eta=\raz_{\mathcal{F}_\mathfrak{B}}\in\{0,1\}^{\mathcal{O}_K}$ they define its orbit closure $X_\eta$ under the multidimensional shift $(S_a)_{a\in\mathcal{O}_K}$, where $\OK$ is considered as an additive group. They call the system $(X_\eta,(S_a)_{a\in\mathcal{O}_K})$ a \emph{$\mathfrak{B}$-free system} and announce similar results as for $\mathscr{B}$-free systems in lattice case (leaving the details to the reader). 
By the existence of a group isomorphism between $\OK$ and $\Z^{[K:\Q]}$ (known as Minkowski embedding), we have that \eqref{(III)} covers \eqref{(II)}.
\end{enumerate}

While the number of lattices with index less than $x$ in $\Z^2$ grows quadratically \cite[p. 10]{MR1460018}, the number of ideals with norm less than $x$ grows only linearly \cite[Proposition 2.1]{MR3821719}. Thus, not all lattices in $\Z^{[K:\Q]}$ are images of ideals in $\OK$  by the Minkowski embedding. A natural question arises: 
\begin{Question}
Is there an analog of Theorem \ref{one-dim} in case of lattices and number fields?
\end{Question}
Clearly, in both settings we drop the assumption of pairwise coprimeness, going beyond the Erd\H os case. In comparison to lattices, the integer ring carries an additional multiplicative structure which allows us to give the positive answer to our question in case of number fields. In fact, we prove more. Let $m\geq1$, let $\mathfrak{B}$ be a collection of ideals with finite indices in $\OK^m=\underbrace{\OK\times\ldots\times\OK}_m$, $\mathcal{M}_\mathfrak{B}:=\bigcup_{\mathfrak{b}\in\mathfrak{B}}\mathfrak{b}$ and $\mathcal{F}_\mathfrak{B}:=\OK^m\setminus\mathcal{M}_\mathfrak{B}$. We consider the orbit closure $X_\eta$ of $\eta=\raz_{\cf_\mathfrak{B}}\in\{0,1\}^{\OK^m}$ under the multidimensional shift $(S_a)_{a\in\OK^m}$. Our main result is the following:
\begin{Th}\label{ideals_general}
Let $(X_\eta,(S_a)_{a\in\OK^m})$ be as above.
The following conditions are equivalent:
\begin{enumerate}[(a)]
\item $(X_\eta,(S_a)_{a\in\OK^m})$ is proximal,\label{giC}
\item $\mathbf{0}\in X_\eta$, where $\mathbf{0}_a=0$ for any $a\in\mathcal{O}_K^m$,\label{giD}
\item for any proper ideals $I_1,I_2,\ldots,I_k$, $k\geq1$, with finite indices in $\OK^m$, we have $\mathcal{M}_\mathfrak{B}\not\subseteq\bigcup_{j=1}^k I_j$,\label{giF}
\item $\mathfrak{B}$ contains an infinite collection of pairwise coprime ideals,\label{giG}
\item for any $a\in\OK^m$ and any ideal $I$ with finite index, we have $I+a\not\subseteq\mathcal{F}_\mathfrak{B}$,\label{giH}
\item $d^*(\mathcal{M}_\mathfrak{B})=1$, where $d^*$ denotes the upper Banach density (see Definition \ref{Banach}).\label{gdens}
\end{enumerate}
\end{Th}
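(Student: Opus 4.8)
The plan is to split the six conditions into a dynamical equivalence, (a)$\Leftrightarrow$(b), and a number-theoretic core relating (b)--(f), and to prove the core first. Throughout I would work with the combinatorial reformulation of (b): $\mathbf{0}\in X_\eta$ if and only if $\mathcal{M}_\mathfrak{B}$ is \emph{thick} in $\OK^m$, i.e.\ every finite $F\subseteq\OK^m$ admits a $g$ with $g+F\subseteq\mathcal{M}_\mathfrak{B}$ (equivalently, $\eta$ vanishes on arbitrarily large windows). Once the core is established, (a) is read off from it together with the fact that a proximal flow must contain a fixed point and the only fixed points of a subshift are $\mathbf{0}$ and $\mathbf{1}$.

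First I would dispatch the easy arrows. For (b)$\Leftrightarrow$(f): a thick set has $d^*=1$; conversely, if $\mathcal{M}_\mathfrak{B}$ is not thick, fix a finite window $F$ that meets $\mathcal{F}_\mathfrak{B}$ in every translate and tile a large box by copies of $F$ to force a uniform positive lower Banach density of $\mathcal{F}_\mathfrak{B}$, so $d^*(\mathcal{M}_\mathfrak{B})<1$. For (b)$\Rightarrow$(e): $I+a$ is syndetic, a thick set meets every syndetic set, so $\mathcal{M}_\mathfrak{B}\cap(I+a)\neq\emptyset$, i.e.\ $I+a\not\subseteq\mathcal{F}_\mathfrak{B}$. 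For (d)$\Rightarrow$(b): if $\mathfrak{b}_1,\mathfrak{b}_2,\dots\in\mathfrak{B}$ are pairwise coprime (comaximal), then for $F=\{c_1,\dots,c_K\}$ the Chinese Remainder Theorem in $\OK^m$ (a finite product of Dedekind domains) produces $g$ with $g\equiv -c_l\ (\mathrm{mod}\ \mathfrak{b}_l)$ for each $l$, whence $g+F\subseteq\mathcal{M}_\mathfrak{B}$; this is the classical Erd\H{o}s gap construction.

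The heart of the argument is $\neg$(d)$\Rightarrow\neg$(c), and I expect this to be the main obstacle, since it is where the multiplicative structure of $\OK^m$ is genuinely used (the statement fails for general sublattices, which is exactly why this theorem is confined to number fields). Here I would argue as follows. If $\mathfrak{B}$ has no infinite pairwise coprime subcollection, pick by Zorn's lemma a maximal pairwise coprime subcollection $\{\mathfrak{b}_1,\dots,\mathfrak{b}_s\}\subseteq\mathfrak{B}$, necessarily finite. Each $\mathfrak{b}_i$ has finite index, hence only finitely many prime divisors, so only finitely many maximal ideals $\mathfrak{m}_1,\dots,\mathfrak{m}_t$ of $\OK^m$ contain some $\mathfrak{b}_i$. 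For an arbitrary $\mathfrak{b}\in\mathfrak{B}$, maximality forces $\mathfrak{b}+\mathfrak{b}_i\neq\OK^m$ for some $i$, so a maximal ideal contains both $\mathfrak{b}$ and $\mathfrak{b}_i$ and is thus one of the $\mathfrak{m}_l$; hence $\mathfrak{b}\subseteq\mathfrak{m}_l$. Therefore $\mathcal{M}_\mathfrak{B}\subseteq\bigcup_{l=1}^t\mathfrak{m}_l$, a finite union of proper finite-index ideals, which is $\neg$(c).

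Finally I would close the loop. If $\neg$(c), say $\mathcal{M}_\mathfrak{B}\subseteq\bigcup_{j=1}^k I_j$ with the $I_j$ proper of finite index, then with $N=\bigcap_j I_j$ the set $\bigcup_j I_j$ is a union of cosets of the finite-index ideal $N$, and since $\OK^m$ is not covered by the finitely many maximal ideals containing the $I_j$ (Chinese Remainder Theorem again), its complement contains a full coset $N+a\subseteq\mathcal{F}_\mathfrak{B}$ — this is $\neg$(e); moreover $\bigcup_j I_j$ is co-syndetic, hence not thick, giving $\neg$(b) and $\neg$(f), and $\mathcal{F}_\mathfrak{B}$ is not thick either, so $\mathbf{1}\notin X_\eta$. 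These implications already yield (b)$\Leftrightarrow$(c)$\Leftrightarrow$(d)$\Leftrightarrow$(e)$\Leftrightarrow$(f). For (a): a proximal flow has a fixed point, so $\mathbf{0}\in X_\eta$ or $\mathbf{1}\in X_\eta$, and the previous line excludes the case ``$\mathbf{1}\in X_\eta$, $\mathbf{0}\notin X_\eta$'', so (a)$\Rightarrow$(b). For (b)$\Rightarrow$(a) I would, for each $x\in X_\eta$, pass to a sequence of shifts of $\eta$ converging to $x$ that is eventually constant modulo each $\mathfrak{b}\in\mathfrak{B}$ (a diagonal argument, using finiteness of the indices), conclude that $\supp x$ avoids a union of cosets of the ideals $\mathfrak{b}\in\mathfrak{B}$, and then apply the Chinese Remainder Theorem to the infinite coprime family from (d) to find, for any $x,y\in X_\eta$ and any window, a common translate of that window on which both $x$ and $y$ vanish; this gives $Prox=X_\eta\times X_\eta$. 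Alternatively, (a)$\Leftrightarrow$(b) for $\mathscr{B}$-free-type systems may be quoted from Lemma \ref{fixed}.
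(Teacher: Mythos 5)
Your proposal is correct, and its overall architecture matches the paper's: reduce \eqref{giC} to \eqref{giD} via the existence of a fixed point (ruling out $\mathbf{1}\in X_\eta$), and connect \eqref{giD} to the arithmetic conditions through thickness, the Chinese Remainder Theorem, and coset-covering arguments. Two steps genuinely diverge from the paper. For \eqref{giF}$\iff$\eqref{giG}, the paper argues positively: it builds the infinite coprime family greedily, at each stage using \eqref{giF} to produce $b\in\mathcal{M}_\mathfrak{B}$ outside the finitely many prime ideals lying over the ideals already chosen. You argue the contrapositive, taking a maximal (hence finite) pairwise coprime subfamily and showing every $\mathfrak{b}\in\mathfrak{B}$ lies in one of the finitely many maximal ideals over that subfamily; the two are mirror images, and yours packages the prime-avoidance bookkeeping a bit more cleanly by working with maximal ideals of $\OK^m$ directly rather than with the factors $I_1\times\dots\times I_m$. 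More substantially, for \eqref{giG}$\implies$\eqref{giC} the paper routes through syndetic proximality: CRT gives $W_{\eta,F_n}\neq\emptyset$, Lemma \ref{d->a} upgrades non-emptiness to syndeticity (because $\mathcal{M}_\mathfrak{B}$ is a union of finite-index subgroups), Lemma \ref{syndetic1} transfers this from the transitive point $\eta$ to every $x\in X_\eta$, and Corollary \ref{pr:4} concludes. You instead analyze an arbitrary $x=\lim_k S_{g_k}\eta$ directly, extract limit residues of $g_k$ modulo each $\mathfrak{b}$ by diagonalization, deduce that $x$ vanishes on a coset of each $\mathfrak{b}$, and apply CRT to $2|F|$ coprime ideals to zero out $x$ and $y$ simultaneously on a translate of any window $F$. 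This is a legitimate, more hands-on alternative; the paper's route yields the formally stronger conclusion of syndetic proximality and lemmas that are reused in Theorem \ref{general_lattices}, while yours avoids that machinery entirely.

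Three small imprecisions, none fatal. First, the claim that $\mathcal{F}_\mathfrak{B}$ is not thick does not follow from the negation of \eqref{giF} (which makes $\mathcal{F}_\mathfrak{B}$ \emph{large}); the correct reason, available to you, is that $\mathcal{F}_\mathfrak{B}$ is disjoint from the syndetic set $\mathfrak{b}$ for any $\mathfrak{b}\in\mathfrak{B}$, and a thick set meets every syndetic set. Second, the fallback of ``quoting Lemma \ref{fixed}'' for \eqref{giD}$\implies$\eqref{giC} does not work as stated: that lemma requires the fixed point to be the \emph{unique minimal subset}, which is essentially what must be proved. Third, ``tile a large box by copies of $F$'' should be a covering/counting estimate (each point of $F+F_n$ lies in at most $|F|$ translates of $F$, and $|F+F_n|\sim|F_n|$), or simply the paper's Lemma \ref{cor_Joel}.
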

In particular, if $m=1$ and $\OK=\Z$ then we recover Theorem \ref{one-dim}.

In case of lattices (as in \eqref{(II)}), the analogue of the implication \eqref{giC}$\implies$\eqref{giG} may fail (see Examples \ref{ex2} and \ref{ex1}). All other conditions remain equivalent (with some necessary modification in \eqref{giF}), for the detailed formulation, see Theorem \ref{general_lattices}.

In order to obtain an analogue of \eqref{giC}$\implies$\eqref{giG}, we assume that the lattices are of a special form: $\Lambda_i=a_1^{(i)}\Z\times a_2^{(i)}\Z\times\ldots\times a_m^{(i)}\Z$, where $(a_1^{(i)},\ldots,a_m^{(i)})\in\N^m\setminus\{(1,1,\ldots,1)\}$. Then, in fact each $\Lambda_i\subseteq\Z^m$ is an ideal of finite index and as an immediate consequence of Theorem~\ref{ideals_general}, we have:
\begin{Cor}\label{rectangular} Suppose that $\Lambda_i=a_1^{(i)}\Z\times a_2^{(i)}\Z\times\ldots\times a_m^{(i)}\Z$, where $(a_1^{(i)},\ldots,a_m^{(i)})\in\N^m\setminus\{(1,1,\ldots,1)\}$, $i\geq1$. Then $(X_\eta,(S_{\mathbf{n}})_{\mathbf{n}\in\Z^m})$ is proximal if and only if $\{\Lambda_i\}_{i\geq1}$ contains an infinite pairwise coprime subset.
\end{Cor}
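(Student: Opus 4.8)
The plan is to recognize, under the stated hypothesis, each $\Lambda_i$ as a proper finite-index ideal of $\Z^m$, and then to read off the statement from the equivalence \eqref{giC}$\iff$\eqref{giG} of Theorem~\ref{ideals_general} applied with $K=\Q$ (so that $\OK=\Z$ and $\OK^m=\Z^m$). First I would observe that, with the coordinatewise ring structure on $\Z^m$, a subset of the form $\Lambda_i=a_1^{(i)}\Z\times\cdots\times a_m^{(i)}\Z$ is exactly the ideal $a_1^{(i)}\Z\times\cdots\times a_m^{(i)}\Z$ of $\Z^m$; it has index $\prod_{j=1}^{m}a_j^{(i)}$, which is finite since each $a_j^{(i)}\in\N$, and it is proper because $(a_1^{(i)},\ldots,a_m^{(i)})\neq(1,\ldots,1)$. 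Hence $\mathfrak B:=\{\Lambda_i\}_{i\geq1}$ is a family of (proper) finite-index ideals in $\OK^m=\Z^m$, and the associated objects $\mathcal{M}_{\mathfrak B}=\bigcup_{i\geq1}\Lambda_i$, $\cf_{\mathfrak B}=\Z^m\setminus\mathcal{M}_{\mathfrak B}$, $\eta=\raz_{\cf_{\mathfrak B}}$ and $(X_\eta,(S_{\mathbf n})_{\mathbf n\in\Z^m})$ are precisely those of Theorem~\ref{ideals_general}.

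I would then invoke Theorem~\ref{ideals_general}: the system $(X_\eta,(S_{\mathbf n})_{\mathbf n\in\Z^m})$ is proximal if and only if $\mathfrak B$ contains an infinite subcollection of pairwise coprime ideals. It remains to check that coprimeness of ideals within $\{\Lambda_i\}_{i\geq1}$ is the same as coprimeness of the lattices in the sense of the statement. This is immediate: for $i\neq k$ one has $\Lambda_i+\Lambda_k=(a_1^{(i)}\Z+a_1^{(k)}\Z)\times\cdots\times(a_m^{(i)}\Z+a_m^{(k)}\Z)=\gcd(a_1^{(i)},a_1^{(k)})\Z\times\cdots\times\gcd(a_m^{(i)},a_m^{(k)})\Z$, and this equals $\Z^m$ exactly when $\gcd(a_j^{(i)},a_j^{(k)})=1$ for every $j$. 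Feeding this back into the equivalence above yields the corollary.

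I do not expect any genuine obstacle, since everything reduces to unravelling definitions once Theorem~\ref{ideals_general} is in hand; the one point that deserves a line of care is this final dictionary between ring-theoretic coprimeness $\Lambda_i+\Lambda_k=\Z^m$ and the ad hoc coprimeness of the rectangular lattices, together with the accompanying remark that the hypothesis $(a_1^{(i)},\ldots,a_m^{(i)})\in\N^m\setminus\{(1,\ldots,1)\}$ is exactly what makes each $\Lambda_i$ a proper ideal of finite index.
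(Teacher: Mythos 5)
Your proposal is correct and is exactly the paper's route: the paper derives Corollary \ref{rectangular} as an immediate consequence of Theorem \ref{ideals_general} after noting that each such $\Lambda_i$ is a proper ideal of finite index in $\Z^m=\OK^m$ for $K=\Q$. Your extra line checking that ideal coprimeness $\Lambda_i+\Lambda_k=\Z^m$ coincides coordinatewise with $\gcd(a_j^{(i)},a_j^{(k)})=1$ is a harmless elaboration of a point the paper leaves implicit (both notions are literally the same condition, since the paper defines coprimeness of subgroups by $\HH_1+\HH_2=\G$).
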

\paragraph{Acknowledgment} The author obtained financial resources for the preparation of a doctoral dissertation from the National Science Center as part of the financing of a doctoral scholarship based on the decision number 2018/28/T/ST1/00435. The author thanks Mariusz Lema{{\'n}}czyk and Joanna Kułaga-Przymus for helpful discussions and remarks on this manuscript, Stanisław Kasjan for all remarks on the lattice case and Vitaly Bergelson for properties of the upper Banach density for amenable groups.
\section{Preliminaries}
\subsection{Subshifts}
Given a countable Abelian group $\mathbb{G}$ and a finite alphabet $\mathcal{A}$, there is a natural action of $\mathbb{G}$ on $\mathcal{A}^\mathbb{G}$ by (commuting) translations:
\begin{equation}\label{eq:fo:2}
S_g({(x_h)}_{h\in \mathbb{G}})={(y_h)}_{h\in \mathbb{G}},\ y_h=x_{h+g}\text{ for }g,h\in \mathbb{G}.
\end{equation}
We say that ${(F_n)}_{n\geq 1}\subseteq \mathbb{G}$ is a \emph{F\o lner sequence} in $\mathbb{G}$ if $F_n$ is finite for any $n\geq1$ and $\lim_{n\to \infty}\frac{|(F_n+g)\cap F_n|}{|F_n|}=1$
for each $g\in \mathbb{G}$. If additionally $\bigcup_{n\geq1} F_n=\G$ and $F_n\subseteq F_{n+1}$ for each $n\geq 1$, we say that $(F_n)_{n\geq 1}$ is \emph{nested}. For any countable Abelian group there exists a nested F\o lner sequence, see \cite{Folner, MR0079220}. The sequence $(\{-n,\ldots,n\}^d)_{n\geq1}$ is an example of a nested F\o lner sequence in $\Z^d$.
\begin{Remark}\label{topology}
Notice that the product topology on $\mathcal{A}^{\G}$ is metrizable. Let ${(F_n)}_{n\geq 1}$ be a nested F\o{}lner sequence. In any metric inducing the product topology, we have the following characterization of convergence of a sequence $(x^{(s)})_{s\geq 1}$ to $x$ in $\mathcal{A}^{\G}$:
$$
x^{(s)}\to x \iff \forall_{n\geq 1}\exists_{s_n} \forall_{s>s_n} \forall_{g\in F_n} \ x^{(s)}_g=x_g.
$$
In particular, this happens for $D$ given by
\begin{equation}\label{metric}
D(x,y)=\min\left\{1,2^{-\sup\{n\geq 1 : \ x_g=y_g \text{ for each }g\in F_n\}}\right\},
\end{equation}
where we put $2^{-\infty}=0$ and $\sup\emptyset=-\infty$.
\end{Remark}
If $X\subseteq \mathcal{A}^{\G}$ is closed and ${(S_g)}_{g\in\G}$-invariant, we say that $X$ is a \emph{subshift}. We will mostly deal with $\mathcal{A}=\{0,1\}$ and $\G=\OK^m$, where $K$ is an algebraic number field. 
\subsection{Ideals in number fields}

Let $K$ be an algebraic number field of degree $d=[K:\Q]$ with the integer ring $\OK$. As in every Dedekind domain, all proper non-zero ideals in $\OK$ factor (uniquely, up to the order) into a product of prime ideals. 
We will denote ideals in $\mathcal{O}_K$ by $\mathfrak{a},\mathfrak{b},\dots$. We have
$$
\mathfrak{a}+\mathfrak{b}:=\{a+b : a\in\mathfrak{a},b\in\mathfrak{b}\},\ \mathfrak{a}\mathfrak{b}:=\{a_1b_1+\dots+a_kb_k : a_i\in \mathfrak{a},b_i\in\mathfrak{b},1\leq i\leq k, k\geq1\}.
$$
Note that 
\begin{equation}\label{prod*}
\mathfrak{a}\mathfrak{b}\subseteq\mathfrak{a}\cap\mathfrak{b}.
\end{equation}
The \emph{algebraic norm} of an ideal $\mathfrak{a}\neq \{0\}$ is defined as $N(\mathfrak{a}):=|\mathcal{O}_K/\mathfrak{a}|=[\mathcal{O}_K:\mathfrak{a}]$. It is finite and $N(\mathfrak{a}\mathfrak{b})=N(\mathfrak{a})N(\mathfrak{b})$ for any ideals $\mathfrak{a}, \mathfrak{b}\neq\{0\}$ (see e.g.\ \cite[Chapter I, \S 6]{MR1697859}). Finally, recall that there is a natural group isomorphism from $\OK$ to a lattice in $\R^d$, called the \emph{Minkowski embedding} (see e.g.\ \cite[Chapter I, \S 5]{MR1697859}). Thus, $\OK$ is isomorphic to $\Z^d$ as an additive group.

Consider now $\OK^m$, $m\geq1$, as a product ring. Recall that for an infinite collection $\mathfrak{B}$ of ideals with finite indices in $\OK^m$, $\mathcal{M}_\mathfrak{B}=\bigcup_{\ell\geq1}\mathfrak{b}_\ell$ and $\mathcal{F}_\mathfrak{B}=\OK^m\setminus\mathcal{M}_\mathfrak{B}$ is the corresponding set of $\mathfrak{B}$-free numbers. Moreover, $\eta=\raz_{\mathcal{F}_\mathfrak{B}}\in\{0,1\}^{\OK^m}$, i.e.,
\begin{equation}\label{Z:2}
\eta(a)=\begin{cases}
1,& \text{if }a \text{ is }\mathfrak{B}\text{-free},\\
0,& \text{otherwise}
\end{cases}
\end{equation}
for $a\in\OK^m$.

Ideals in $\OK^m$ take a special form:
$$I=I_1\times\ldots\times I_m\subseteq\OK^m,$$
where each $I_j, 1\leq j\leq m$, is an ideal in $\OK$. Indeed, we have:
\begin{Lemma}[see {\cite[Chapter I, \S8, Proposition 8]{MR1727844}}]\label{ideals_in_product}
Let $R_i$ be commutative rings with unity, $i=1,\ldots, m$. Let $I\subseteq R_1\times\ldots\times R_m$ be an ideal. Then there exist ideals $I_1\subseteq R_1,\ldots, I_m\subseteq R_m$ such that $I=I_1\times\ldots\times I_m$.
\end{Lemma}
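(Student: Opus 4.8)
The plan is to recover $I$ from its images under the coordinate projections. Let $\pi_j\colon R_1\times\cdots\times R_m\to R_j$ denote the projection onto the $j$-th factor; this is a surjective ring homomorphism, so $I_j:=\pi_j(I)$ is an ideal of $R_j$, using the standard fact that the image of an ideal under a surjective ring homomorphism is again an ideal. It then remains only to verify that $I=I_1\times\cdots\times I_m$.

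The inclusion $I\subseteq I_1\times\cdots\times I_m$ is immediate from the definition of the $I_j$, so the real content is the reverse inclusion. Here I would use the idempotents $e_j:=(0,\ldots,0,1,0,\ldots,0)\in R_1\times\cdots\times R_m$, carrying the unity of $R_j$ in the $j$-th coordinate and $0$ elsewhere; these exist precisely because each $R_i$ has a unity. Given an arbitrary element $(y_1,\ldots,y_m)\in I_1\times\cdots\times I_m$, for each $j$ choose $x^{(j)}\in I$ with $\pi_j(x^{(j)})=y_j$. Since $I$ is an ideal, $e_j\,x^{(j)}=(0,\ldots,0,y_j,0,\ldots,0)\in I$, and summing over the finitely many indices $j$ gives $(y_1,\ldots,y_m)=\sum_{j=1}^m e_j\,x^{(j)}\in I$. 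This yields $I_1\times\cdots\times I_m\subseteq I$, hence equality.

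I do not expect a genuine obstacle: the argument is elementary and the two displayed inclusions essentially constitute the whole proof. The only points worth flagging are that the hypothesis ``rings with unity'' is used twice --- to make the projections surjective and to produce the idempotents $e_j$ --- and that finiteness of $m$ is needed in order to form the finite sum $\sum_{j=1}^m e_j\,x^{(j)}$; for an infinite product the statement already fails, as the ideal $\bigoplus_i R_i\subseteq\prod_i R_i$ shows. Accordingly, the write-up would simply record the $I_j:=\pi_j(I)$, observe that they are ideals, and present the two inclusions above.
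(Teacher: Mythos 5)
Your proof is correct. The paper does not prove this lemma at all --- it only cites Bourbaki (Algebra I, Chapter I, \S 8, Proposition 8) --- and your argument via the coordinate projections $\pi_j$ and the idempotents $e_j$ is the standard one for that result, with both inclusions and the role of the unity handled properly; the remark that finiteness of $m$ is essential (the direct sum inside an infinite product being a counterexample) is also accurate.
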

Moreover, the index of $I=I_1\times\ldots\times I_m\subseteq\OK^m$ is infinite precisely when $I_i=\{0\}$ for some $1\leq i\leq m$. In particular, if $m\geq2$, there exist non-zero ideals of infinite index.

Notice that for $m\geq2$, $\OK^m$ is no longer a domain (all non-zero elements with at least one coordinate equal to zero are zero divisors in $\OK^m$). However, any proper ideal with finite index in $\OK^m$ factors (uniquely, up to the order) into a product of prime ideals. Prime ideals in $\OK^m$ are of the form $\OK^s\times\mathfrak{p}\times\OK^{m-s-1}$, where $\mathfrak{p}$ is a prime ideal in $\OK$ and $0\leq s<m$.
\subsection{Lattices in $\Z^m$}
Let $m\geq1$. We say that a subset $\Lambda\subseteq\Z^m$ is a \emph{lattice} if $\Lambda$ is a subgroup of $\Z^m$ with finite index, i.e., $[\Z^m:\Lambda]<\infty$. For an infinite collection $\mathscr{B}=\{\Lambda_i\}_{i\geq1}$ of lattices, let $\mathcal{M}_\mathscr{B}:=\bigcup_{i\geq1}\Lambda_i$ and $\mathcal{F}_\mathscr{B}:=\Z^m\setminus\mathcal{M}_\mathscr{B}$ be the corresponding set of $\sB$-free lattice points. Let $\eta:=\raz_{\mathcal{F}_\sB}\in\{0,1\}^{\Z^m}$, i.e.,
\begin{equation*}
\eta(\mathbf{m})=\begin{cases}
1,& \text{if }\mathbf{m} \text{ is a }\mathscr{B}\text{-free}\text{ lattice point},\\
0,& \text{otherwise}
\end{cases}
\end{equation*}
for $\mathbf{m}\in\Z^m$. Finally, let $X_\eta\subseteq\{0,1\}^{\Z^m}$ be the orbit closure under the multidimensional shift $(S_{\mathbf{n}})_{\mathbf{n}\in\Z^m}$. We call $(X_\eta,(S_{\mathbf{n}})_{\mathbf{n}\in\Z^m})$ a \emph{$\mathscr{B}$-free system}.
\section{Tools}\label{section3}
\subsection{Density and F\o lner sequences}
Let $\G$ be a countable Abelian group.
\begin{Def}\label{Banach}
By the \emph{upper Banach density} of $A\subseteq\G$ we mean $$d^*(A)=\sup\{\limsup_{n\to\infty}\frac{|A\cap F_n|}{|F_n|}\ : \ (F_n)_{n\geq1} \text{ is a F\o lner sequence in }\G\}.$$
\end{Def}
We have:
\begin{itemize}
\item $d^*(A)=d^*(A+g)$ for any $A\subseteq\G$ and any $g\in\G$,
\item if $A_i\subseteq\G$ and $d^*(A_i)=1$ for any $i=1,\ldots,n$, then $d^*(\bigcap_{i=1}^nA_i)=1$.
\end{itemize}
\begin{Lemma}\label{cor_Joel}
Let $A\subseteq\G$. Then $d^*(A)=1$ if and only if for any finite set $F\subseteq\G$ there exists $x\in\G$ such that $F+x\subseteq A$. 
\end{Lemma}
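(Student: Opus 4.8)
The statement is an ``if and only if'', and I would handle the two implications separately. The implication ``every finite subset of $\G$ has a translate contained in $A$'' $\Rightarrow$ $d^*(A)=1$ is the easy one. Fix any F\o lner sequence $(F_n)_{n\geq1}$ in $\G$ (one exists since $\G$ is a countable Abelian group). By hypothesis, for each $n$ there is $x_n\in\G$ with $F_n+x_n\subseteq A$. Since cardinality is translation invariant, $|((F_n+x_n)+g)\cap(F_n+x_n)|=|(F_n+g)\cap F_n|$ and $|F_n+x_n|=|F_n|$ for every $g\in\G$, so $(F_n+x_n)_{n\geq1}$ is again a F\o lner sequence and hence admissible in the supremum defining $d^*$. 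As $F_n+x_n\subseteq A$ we have $|A\cap(F_n+x_n)|=|F_n+x_n|$, so $\limsup_n\frac{|A\cap(F_n+x_n)|}{|F_n+x_n|}=1$, and therefore $d^*(A)=1$.

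For the converse I would argue by contraposition. Suppose there is a finite set $F=\{f_1,\dots,f_k\}\subseteq\G$ with $(F+x)\cap(\G\setminus A)\neq\emptyset$ for every $x\in\G$; equivalently $(\G\setminus A)-F=\G$, i.e.\ $\G\setminus A$ is syndetic. The elementary point is that then, for any finite $E\subseteq\G$, covering $E$ by the sets $\bigl((\G\setminus A)-f_i\bigr)\cap E$ gives
\[
|E|\;\leq\;\sum_{i=1}^k\bigl|(\G\setminus A)\cap(E+f_i)\bigr|.
\]
I would apply this with $E=F_n$ along an \emph{arbitrary} F\o lner sequence $(F_n)_{n\geq1}$, combine it with $|(\G\setminus A)\cap(F_n+f_i)|\leq|(\G\setminus A)\cap F_n|+|(F_n+f_i)\setminus F_n|$, divide by $|F_n|$, and let $n\to\infty$: the F\o lner property sends each error term $|(F_n+f_i)\setminus F_n|/|F_n|$ to $0$, leaving $\liminf_n\frac{|(\G\setminus A)\cap F_n|}{|F_n|}\geq\frac1k$, and hence $\limsup_n\frac{|A\cap F_n|}{|F_n|}\leq 1-\frac1k$. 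Since this bound holds along every F\o lner sequence, $d^*(A)\leq 1-\frac1k<1$, which is exactly the contrapositive.

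The step I expect to be slightly delicate is the last one: because $d^*$ is a supremum over \emph{all} F\o lner sequences, exhibiting a single sequence along which $A$ has small density would not suffice. What makes the argument close is that syndeticity of $\G\setminus A$ forces a \emph{uniform} lower bound on the $\liminf$ (not merely the $\limsup$) of the density of $\G\setminus A$ along every F\o lner sequence, and this $\liminf$-bound is converted into a $\limsup$-bound for $A$ via the identity $|A\cap F_n|+|(\G\setminus A)\cap F_n|=|F_n|$. Everything else is a routine counting estimate together with the defining property of F\o lner sequences.
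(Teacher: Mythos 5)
Your proof is correct, and the second implication (the one you call the converse) takes a genuinely different route from the paper. The easy direction is identical: translate a F\o lner sequence into $A$ and observe that the translates are again a F\o lner sequence. For the other direction the paper argues directly: from $d^*(A)=1$ and translation invariance it gets $d^*(A-g)=1$ for each $g\in F$, invokes the fact (stated as a bullet point just before the lemma, not proved there) that a finite intersection of sets of upper Banach density one again has upper Banach density one, and then any $b\in\bigcap_{g\in F}(A-g)$ satisfies $F+b\subseteq A$. You instead contrapose: if no translate of $F$ lands in $A$, then $\G\setminus A$ is syndetic with $F$ as a witness, and your covering estimate $|F_n|\leq\sum_{i=1}^k|(\G\setminus A)\cap(F_n+f_i)|$ together with the F\o lner property yields the uniform bound $d^*(A)\leq 1-\frac1k$ along \emph{every} F\o lner sequence; your closing remark correctly identifies why the uniformity over all F\o lner sequences is the point that needs care, and the $\liminf$-to-$\limsup$ conversion handles it. The paper's argument is shorter but leans on the intersection property of density-one sets, which is itself a nontrivial fact about upper Banach density in amenable groups (the author credits Bergelson for such properties); your argument is self-contained, elementary, and even quantitative, at the cost of a slightly longer counting computation. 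Both are valid proofs of the lemma.
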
 
\begin{proof}
Suppose that $d^*(A)=1$. So $d^*(A-g)=1$ for any $g\in\G$. Let $F\subseteq\G$ be finite. Hence also $d^*(\bigcap_{g\in F}(A-g))=1$. Then for any $b\in\bigcap_{g\in F}(A-g)$, we have $F+b\subseteq A$.
In the other direction, if $A$ contains shifts $F_n+x_n$ for some F\o lner sequence $(F_n)_{n\geq1}$ in $\G$ and some $(x_n)_{n\geq1}\subseteq\G$, then these shifts form a new F\o lner sequence, since $|F_n+x_n|=|F_n|$ and $|(F_n+x_n+g)\cap(F_n+x_n)|=|(F_n+g)\cap F_n|$ for any $n\geq1$ and any $g\in\G$. So $d^*(A)=1$.
\end{proof}
\begin{Lemma}\label{Folner_prod}
Let $(F_n)_{n\geq1}$ be a nested F\o lner sequence in $\G$ and $\mathbb{H}\subseteq\G$ be a subgroup with finite index. Then, for sufficiently large $n\geq1$, 
\begin{equation*}
\HH\cap(F_n+x)\neq\emptyset\text{ for any } x\in\G.
\end{equation*}
\end{Lemma}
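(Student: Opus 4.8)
The plan is to reduce the claim to the purely combinatorial fact that, for $n$ large enough, $F_n$ meets every coset of $\HH$ in $\G$. Since $\HH$ has finite index, write $k=[\G:\HH]$ and fix coset representatives $g_1,\dots,g_k\in\G$, so that $\G=\bigcup_{i=1}^k(\HH+g_i)$ is a disjoint union. For a fixed $x\in\G$ and $f\in F_n$ one has $f+x\in\HH$ if and only if $f\in\HH-x$, and $\HH-x=\HH+(-x)$ is one of the cosets $\HH+g_i$. Hence the statement ``$\HH\cap(F_n+x)\neq\emptyset$ for every $x\in\G$'' is equivalent to ``$F_n\cap(\HH+g_i)\neq\emptyset$ for every $i=1,\dots,k$''; in particular it suffices to find a single threshold $N$ after which $F_n$ intersects each of these finitely many cosets.

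Next I would exploit only the nestedness of $(F_n)_{n\geq1}$, i.e.\ the facts that $F_n\subseteq F_{n+1}$ and $\bigcup_{n\geq1}F_n=\G$. For each $i\in\{1,\dots,k\}$ choose any point $h_i\in\HH+g_i$ (a coset is nonempty). Because $\bigcup_{n\geq1}F_n=\G$, there is $n_i$ with $h_i\in F_{n_i}$. Put $N=\max\{n_1,\dots,n_k\}$. Then for every $n\geq N$ and every $i$ we have $h_i\in F_{n_i}\subseteq F_N\subseteq F_n$, so $h_i\in F_n\cap(\HH+g_i)$; thus $F_n$ meets all $k$ cosets of $\HH$. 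Combined with the reformulation of the first paragraph, this gives the lemma for all $n\geq N$.

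A remark on the hypotheses: the F\o lner property of $(F_n)_{n\geq1}$ is not actually needed for this statement — only nestedness (increasing and exhausting $\G$) is used, and it is precisely the exhaustion $\bigcup_{n\geq1}F_n=\G$ that lets us capture one representative from each of the finitely many cosets, after which monotonicity propagates this to all larger $n$. The only point requiring a little care is the translation bookkeeping, namely that $\{\HH-x : x\in\G\}$ is exactly the finite set of cosets of $\HH$, so that one threshold $N$ works simultaneously for all $x\in\G$; beyond that, there is no genuine obstacle.
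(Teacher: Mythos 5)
Your proof is correct, but it takes a genuinely different route from the paper's. You reduce the claim to the observation that $F_n$ must eventually meet each of the finitely many cosets of $\HH$, and this follows purely from nestedness ($F_n\subseteq F_{n+1}$ and $\bigcup_{n\geq1}F_n=\G$): pick one representative per coset, wait until all of them have been absorbed into some $F_N$, and note that the threshold $N$ is uniform in $x$ because $\{\HH-x\ :\ x\in\G\}$ is exactly the finite set of cosets of $\HH$. The paper instead uses the F\o lner property quantitatively: writing $s=[\G:\HH]$, it pigeonholes to find a coset $\HH+g_j$ with $|(F_n+x)\cap(\HH+g_j)|\geq\frac{1}{s}|F_n|$ and then uses $|(F_n+g_j)\cap F_n|>(1-\vep)|F_n|$ to transfer this estimate back to $\HH$ itself, obtaining the stronger conclusion $|(F_n+x)\cap\HH|\geq\left(\frac{1}{s}-\vep\right)|F_n|>0$. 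Your argument is more elementary and validates your closing remark: the F\o lner property is indeed superfluous here once nestedness is assumed. Conversely, the paper's argument uses only the F\o lner property (not nestedness) and yields a positive-proportion statement rather than mere non-emptiness; however, since every application of the lemma in the paper only needs non-emptiness and always has a nested sequence available, your proof would serve equally well there.
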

\begin{proof}
Let $[\G : \HH]=s$ and $g_0=0,g_1,g_2,\ldots,g_{s-1}\in\G$ be such that 
\begin{equation}\label{*}
\G=(\HH+g_0)\cup(\HH+g_1)\cup\ldots\cup(\HH+g_{s-1}).
\end{equation} 
Let $\vep\in(0,\frac{1}{s})$ and $n_0\geq1$ be such that for any $n\geq n_0$, we have
\begin{equation}\label{**}
|(F_n+g_j)\cap F_n|>(1-\vep)|F_n|
\end{equation}
for $j=1,\ldots,{s-1}$. Let $x\in\G$. By \eqref{*}, there exists $0\leq j\leq s-1$ such that $|(F_n+x)\cap(\HH+g_j)|\geq \frac{1}{s}|F_n+x|=\frac{1}{s}|F_n|$. Hence
\begin{equation}\label{***}
|(F_n-g_j+x)\cap\HH|\geq \frac{1}{s}|F_n|.
\end{equation} Notice that 
\begin{align}\label{****}
\begin{split}
|(F_n+x)\cap\HH|&\geq|(F_n+x)\cap(F_n-g_j+x)\cap\HH|\\
&=|(F_n-g_j+x)\cap\HH|-|(F_n+x)^c\cap(F_n-g_j+x)\cap\HH|\\
&\geq|(F_n-g_j+x)\cap\HH|-|(F_n+x)^c\cap(F_n-g_j+x)|\\
&=|(F_n-g_j+x)\cap\HH|-|(F_n^c+x)\cap(F_n-g_j+x)|\\
&=|(F_n-g_j+x)\cap\HH|-|F_n^c\cap(F_n-g_j)|.
\end{split}
\end{align}
Therefore and by \eqref{***} and \eqref{**}, we obtain 
$$|(F_n+x)\cap\HH|\geq\left(\frac{1}{s}-\vep\right)|F_n|>0.$$
In particular, $(F_n+x)\cap\HH\neq\emptyset$. 
\end{proof}
\begin{Lemma}[see {\cite[Chapter I, Proposition 2.2]{MR1878556}}]\label{intersect}
Let $\HH_1,\HH_2\subseteq\G$ be subgroups with finite indices. Then $\HH_1\cap\HH_2$ is also a subgroup with finite index in $\G$ and $[\G : \HH_1\cap\HH_2]=[\G : \HH_1]\cdot[\HH_1 : \HH_1\cap\HH_2]$.
\end{Lemma}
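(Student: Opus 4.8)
The plan is to reduce the whole statement to the standard multiplicativity of the index along the chain $\HH_1\cap\HH_2\subseteq\HH_1\subseteq\G$. That $\HH_1\cap\HH_2$ is a subgroup of $\G$ is immediate (an intersection of subgroups is a subgroup), so the real content is finiteness of the index and the formula.

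First I would establish that $[\HH_1:\HH_1\cap\HH_2]$ is finite. For this, consider the map from the coset space $\HH_1/(\HH_1\cap\HH_2)$ to $\G/\HH_2$ sending $h+(\HH_1\cap\HH_2)\mapsto h+\HH_2$ for $h\in\HH_1$. It is well defined, and it is injective because $h+\HH_2=h'+\HH_2$ with $h,h'\in\HH_1$ forces $h-h'\in\HH_1\cap\HH_2$; hence $[\HH_1:\HH_1\cap\HH_2]\leq[\G:\HH_2]<\infty$. (This is just the second isomorphism theorem; for abelian $\G$ one may equivalently write $\HH_1/(\HH_1\cap\HH_2)\cong(\HH_1+\HH_2)/\HH_2$.)

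Next I would prove the index formula by the usual coset-counting argument. Let $r=[\G:\HH_1]$ with coset representatives $g_1,\dots,g_r$ of $\HH_1$ in $\G$, and let $t=[\HH_1:\HH_1\cap\HH_2]$ (finite, by the previous step) with representatives $h_1,\dots,h_t$ of $\HH_1\cap\HH_2$ in $\HH_1$. One checks that $\{g_i+h_j : 1\leq i\leq r,\ 1\leq j\leq t\}$ is a complete system of coset representatives of $\HH_1\cap\HH_2$ in $\G$: given $g\in\G$, we have $g-g_i\in\HH_1$ for a unique $i$, and then $g-g_i-h_j\in\HH_1\cap\HH_2$ for a unique $j$, so the $g_i+h_j$ exhaust the cosets; and if $g_i+h_j$ and $g_{i'}+h_{j'}$ differ by an element of $\HH_1\cap\HH_2$, reducing mod $\HH_1$ gives $i=i'$ and then reducing mod $\HH_1\cap\HH_2$ inside $\HH_1$ gives $j=j'$, so the cosets are distinct. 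Counting yields $[\G:\HH_1\cap\HH_2]=rt=[\G:\HH_1]\cdot[\HH_1:\HH_1\cap\HH_2]<\infty$, which is exactly the assertion.

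I do not expect a genuine obstacle here: the only step that needs an argument rather than bookkeeping is the finiteness of $[\HH_1:\HH_1\cap\HH_2]$, which is handled by the embedding $\HH_1/(\HH_1\cap\HH_2)\hookrightarrow\G/\HH_2$; the rest is the tower law for subgroup indices, valid in any group. Since in our applications $\G$ is abelian and every subgroup is normal, one could alternatively phrase the entire argument through quotient groups.
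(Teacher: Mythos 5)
Your proof is correct: the embedding $\HH_1/(\HH_1\cap\HH_2)\hookrightarrow\G/\HH_2$ gives finiteness of the middle index, and the tower law does the rest. The paper offers no proof of its own here --- it simply cites Lang --- and your argument is exactly the standard one found in that reference, so there is nothing to compare beyond noting that you have supplied the omitted details correctly.
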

\subsection{Proximality}
Let $(T_g)_{g\in\mathbb{G}}$ be an action of a countable Abelian group $\mathbb{G}$ by homeomorphisms on a compact metric space $(X,D)$.
A pair of points $(x,y)\in X\times X$ is called \emph{proximal} if $$\liminf_{g\to\infty}D(T_gx,T_gy)=0.$$ By $g_k\to\infty$ we mean that for any finite subset $F\subseteq\mathbb{G}$ there exists $K\geq1$ such that for any $k\geq K$ we have $g_k\not\in F$. By $Prox$ we denote the set of all proximal pairs in $X\times X$. A system $(X,(T_g)_{g\in\mathbb{G}})$ is called \emph{proximal} if $Prox=X\times X$, see \cite{MR1958753}.
\begin{Remark}\label{equiv_prox}
A pair of points $(x,y)\in X\times X$ is proximal if and only if there exist a sequence $(g_i)_{i\geq1}\subseteq\G$ and a point $z\in X$ such that $$\lim_{i\to\infty}D(T_{g_i} x,z)=\lim_{i\to\infty}D(T_{g_i}y,z)=0.$$
\end{Remark}
We have the following:
\begin{Lemma}[{see \cite[Prop. 2.2]{AK} for $\G=\Z$}]\label{fixed}
Let $X$ be a compact metric space and $(X,(T_{g})_{g\in\G})$ be a topological dynamical system, where $\G$ is a countable Abelian group. Then a system $(X,(T_{g})_{g\in\G})$ is proximal if and only if it has a fixed point which is the unique minimal subset of $X$.
\end{Lemma}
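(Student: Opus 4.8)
The plan is to treat the two implications separately, using the reformulation from Remark~\ref{equiv_prox}: a pair $(x,y)$ is proximal iff some sequence of shifts carries both points to a common limit.

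For the direction ``unique minimal subset is a fixed point $\implies$ proximal'', I would fix an arbitrary pair $(x,y)\in X\times X$ and work in the product system $\bigl(X\times X,(T_g\times T_g)_{g\in\G}\bigr)$. Let $Z$ be the orbit closure of $(x,y)$ there; by compactness $Z$ contains a minimal subset $N$. Since the coordinate projections $\pi_1,\pi_2\colon X\times X\to X$ are continuous and $\G$-equivariant, $\pi_1(N)$ and $\pi_2(N)$ are minimal subsets of $X$, hence both equal $\{x_0\}$ by hypothesis; therefore $N=\{(x_0,x_0)\}$ and $(x_0,x_0)\in Z$. This yields a sequence $(g_i)$ with $T_{g_i}x\to x_0$ and $T_{g_i}y\to x_0$, so $(x,y)$ is proximal by Remark~\ref{equiv_prox}. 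As $(x,y)$ was arbitrary, the system is proximal.

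For the converse, assume the system is proximal. Two easy observations come first. Uniqueness of the minimal subset: if $M_1,M_2$ are minimal and $x_i\in M_i$, proximality of $(x_1,x_2)$ produces a common limit $z$ of suitable shifts, and $z\in\overline{\{T_gx_i:g\in\G\}}=M_i$ for $i=1,2$, so $M_1\cap M_2\neq\emptyset$ and hence $M_1=M_2$; since $X$ is compact there is exactly one minimal subset $M$. Moreover, if $M$ is a single point $\{x_0\}$, then $x_0$ is automatically fixed (by invariance of $\{x_0\}$) and is, by the previous sentence, the unique minimal subset. So the whole statement reduces to showing that $M$ is a singleton. Here I would use that $M$, being a subsystem of a proximal system, is itself proximal and, by construction, minimal, together with the fact that $\G$, being countable and Abelian, is amenable.

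The crux, and the step I expect to be the real obstacle, is thus the classical fact that a minimal proximal flow of an amenable group is trivial (amenability is essential here: nonamenable groups admit nontrivial minimal proximal flows). For $\G=\Z$ this is exactly the content of the cited \cite{AK}; in general it is the assertion that amenable groups are strongly amenable, due to Glasner. The argument I would carry out, or cite, is: amenability furnishes a $\G$-invariant Borel probability measure $\mu$ on $M$, which has full support because $M$ is minimal, and one then shows, using proximality, that $\mu$ must be a point mass, whence $M$ is a single point and the proof is finished by the two observations above. The delicate point is precisely that $\mu$ is a point mass: a naive Cesàro/ergodic-averaging argument does not suffice, since proximality of a pair only forces $D(T_gx,T_gy)$ to be small along a sequence and not on a set of density one, so one must invoke the finer structure — for instance, that proximality of $X$ yields, via a compactness/finite-intersection argument, a constant map in the Ellis semigroup $E(X)$, together with Glasner's argument turning the invariant measure into a point mass.
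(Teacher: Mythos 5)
Your ``$\Leftarrow$'' direction is correct and standard: passing to a minimal subset $N$ of the orbit closure of $(x,y)$ in the product system and projecting to both coordinates does force $N=\{(x_0,x_0)\}$, which gives proximality of $(x,y)$ via Remark~\ref{equiv_prox}. In the ``$\Rightarrow$'' direction, your uniqueness argument for the minimal set is also fine. The problem is the remaining step --- that the unique minimal set $M$ is a singleton --- which is exactly where the proposal stops being a proof: you reduce it to the theorem that amenable groups admit no nontrivial minimal proximal flows, and then offer only a sketch of that theorem (invariant measure of full support, ``one then shows that $\mu$ must be a point mass'', constant maps in the Ellis semigroup) which you yourself flag as the unresolved obstacle. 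As written, the crucial implication is outsourced to a deep result that is neither proved nor cleanly cited, so the argument is incomplete.

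The gap is avoidable: no amenability, invariant measures, or Ellis semigroups are needed, only commutativity, and this elementary route is what the paper means by ``the same lines as for $\G=\Z$'' (cf.\ the single-map argument behind \cite[Prop.~2.2]{AK}). Fix $g\in\G$ and $x\in M$. If $T_gx\neq x$, proximality of the pair $(x,T_gx)$ gives $h_i\to\infty$ with $D(T_{h_i}x,T_{h_i}T_gx)\to0$; passing to a subsequence with $T_{h_i}x\to z$ and using $T_{h_i}T_g=T_gT_{h_i}$, continuity yields $T_gz=z$, and $z\in M$ since $M$ is closed and invariant. Hence $F_g:=\{w\in M : T_gw=w\}$ is nonempty; it is closed, and it is $\G$-invariant because $T_g(T_hw)=T_h(T_gw)=T_hw$. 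Minimality of $M$ gives $F_g=M$ for every $g\in\G$, so every point of $M$ is fixed by the whole action, and minimality then forces $M$ to be a single (fixed) point. This two-line use of the abelian hypothesis replaces the entire strong-amenability machinery you invoke.
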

We skip the proof as it goes the same lines as for $\G=\Z$.

A subset $W\subseteq\G$ is called \emph{syndetic} if there exists a finite set $K\subseteq\G$ such that $W+K=\G$. Let $\vep>0$ and $x,y\in\mathcal{A}^\G$. Put
$$W_{x,y,\vep}:=\{g\in\G\ : \ D(S_gx,S_gy)<\vep\},$$
where $D$ is a metric on $\mathcal{A}^\G$ given by \eqref{metric} for some nested F\o lner sequence $(F_n)_{n\geq1}$ in $\G$. A pair of points $(x,y)\in X\times X$ is called \emph{syndetically proximal} if $W_{x,y,\vep}$ is syndetic for any $\vep>0$. Denote by $SynProx$ the set of all syndetically proximal pairs in $X\times X$. A system $(X,(T_g)_{g\in\G})$ is called \emph{syndetically proximal} if $SynProx=X\times X$. We have $SynProx\subseteq Prox$.
\begin{Th}[{\cite[Theorem 1]{Cl}}]\label{Cl}
The relation $SynProx$ is an equivalence relation.
\end{Th}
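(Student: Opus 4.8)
The plan is to verify the three defining properties of an equivalence relation for $SynProx$. Reflexivity is trivial, since $W_{x,x,\vep}=\G$ is syndetic for every $\vep>0$, and symmetry follows at once from $D(u,v)=D(v,u)$, which gives $W_{x,y,\vep}=W_{y,x,\vep}$. The entire difficulty is transitivity, and the naive inclusion $W_{x,z,\vep}\supseteq W_{x,y,\vep/2}\cap W_{y,z,\vep/2}$ is useless, because an intersection of two syndetic sets need not be syndetic. Instead I would pass to the Stone--\v{C}ech compactification $\beta\G$ and exploit its semigroup structure.

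First I would set up the standard framework. Equipped with the (right-topological) extension of the group operation, $\beta\G$ is a compact right-topological semigroup; by Ellis's theorem it possesses minimal left ideals, each of which is closed. It acts on $X$ by $p\cdot x:=\lim_{g\to p}T_g x$ (limit along the ultrafilter $p$); for each fixed $x$ the map $p\mapsto p\cdot x$ is continuous from $\beta\G$ to $X$, and the action is associative in the sense $p\cdot(q\cdot x)=(pq)\cdot x$. I would also invoke the classical fact that a set $A\subseteq\G$ is syndetic if and only if $\overline A\cap L\neq\emptyset$ for every minimal left ideal $L\subseteq\beta\G$, where $\overline A$ denotes the closure of $A$ in $\beta\G$.

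The heart of the argument is the reduction: $(x,y)\in SynProx$ if and only if for every minimal left ideal $L\subseteq\beta\G$ there exists $p\in L$ with $p\cdot x=p\cdot y$; moreover, once this holds for one $p\in L$ it holds for all of $L$. For this last point, $Lp$ is a left ideal contained in the minimal left ideal $L$, hence $Lp=L$, so every $q\in L$ has the form $rp$ with $r\in L$, and $q\cdot x=r\cdot(p\cdot x)=r\cdot(p\cdot y)=q\cdot y$. For the ``only if'' direction I would fix a minimal left ideal $L$; for each $\vep>0$, syndeticity of $W_{x,y,\vep}$ yields a point $p_\vep\in\overline{W_{x,y,\vep}}\cap L$, and continuity of the action and of $D$ gives $D(p_\vep\cdot x,p_\vep\cdot y)\le\vep$; the sets $C_\vep:=\{p\in L:D(p\cdot x,p\cdot y)\le\vep\}$ are closed, nonempty and decreasing in $\vep$, so compactness of $L$ forces $\bigcap_{\vep>0}C_\vep=\{p\in L:p\cdot x=p\cdot y\}$ to be nonempty. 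For the ``if'' direction, given $\vep>0$ and a minimal left ideal $L$, pick $p\in L$ with $p\cdot x=p\cdot y$ and write $p=\lim_i g_i$ with $g_i\in\G$; then $T_{g_i}x\to p\cdot x$ and $T_{g_i}y\to p\cdot y=p\cdot x$, so $g_i\in W_{x,y,\vep}$ eventually and $p\in\overline{W_{x,y,\vep}}$; thus $\overline{W_{x,y,\vep}}$ meets every minimal left ideal, i.e.\ $W_{x,y,\vep}$ is syndetic.

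Granting the reduction, transitivity is immediate: if $(x,y),(y,z)\in SynProx$, then for every minimal left ideal $L$ and every $p\in L$ we have $p\cdot x=p\cdot y=p\cdot z$, whence $(x,z)\in SynProx$. I expect the main obstacle to be exactly the reduction step --- namely, correctly invoking the $\beta\G$-characterization of syndeticity and running the compactness argument that upgrades ``$D(p\cdot x,p\cdot y)\le\vep$ for all $\vep>0$'' to ``$p\cdot x=p\cdot y$''; the purely algebraic observation ($Lp=L$ for $p$ in a minimal left ideal) is then routine. A more self-contained variant would work inside the enveloping semigroup $E(X)=\overline{\{T_g:g\in\G\}}\subseteq X^X$ rather than $\beta\G$, but this forces one to reprove the syndeticity criterion in that setting, so I would favour the $\beta\G$ formulation. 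The argument uses nothing about $\G$ beyond its being a group.
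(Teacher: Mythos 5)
Your argument is correct, but note that the paper itself contains no proof of this statement: Theorem~\ref{Cl} is quoted verbatim from Clay \cite{Cl} and used as a black box, so there is no internal proof to compare against. What you have written is the standard modern proof via the Stone--\v{C}ech compactification: the characterization ``$A$ syndetic $\iff$ $\overline{A}$ meets every minimal left ideal of $\beta\G$'', the continuity of $p\mapsto p\cdot x$ and the associativity $(pq)\cdot x=p\cdot(q\cdot x)$, the compactness argument upgrading $D(p_\vep\cdot x,p_\vep\cdot y)\le\vep$ for all $\vep>0$ to the existence of $p$ with $p\cdot x=p\cdot y$, and the algebraic step $Lp=L$ which propagates the coincidence from one point of a minimal left ideal to all of it --- each of these is correctly stated and correctly assembled, and transitivity (hence the equivalence-relation property, reflexivity and symmetry being trivial) follows as you say. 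Clay's original 1963 argument is the same idea in older clothing: it works with the enveloping semigroup and the Gottschalk--Hedlund calculus of syndetic and replete (thick) subsets of the acting group rather than with $\beta\G$, but the mechanism --- locating an idempotent-like element in a minimal ideal that simultaneously collapses $x$ to $y$ and $y$ to $z$ --- is the same, so your route is a legitimate modernization rather than a genuinely different proof. Two small presentational caveats: since $\beta\G$ is not metrizable you should read $p=\lim_i g_i$ as a limit of a \emph{net} (or a limit along the ultrafilter $p$), not of a sequence; and one should fix the convention on $\beta\G$ under which right translations $q\mapsto qp$ are continuous, since both the closedness of minimal left ideals and the identity $Lp=L$ rely on it. Neither affects the validity of the proof, and, as you observe, commutativity of $\G$ is never used.
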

\begin{Th}[\cite{Wu}]\label{Wu}
If the relation $Prox$ is transitive, then $Prox=SynProx$.
\end{Th}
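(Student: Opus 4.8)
The plan is to pass to the Ellis enveloping semigroup $E=E(X)=\overline{\{T_g:g\in\G\}}\subseteq X^X$, a compact right-topological semigroup (each right translation $\rho_p\colon q\mapsto qp$ is continuous), which by Ellis's theorem contains minimal left ideals, each carrying an idempotent, and which satisfies the absorption law $pu=p$ whenever $p$ lies in a minimal left ideal $L$ and $u=u^2\in L$; also every $p\in E$ commutes with every $T_g$ since $\G$ is abelian. Since $SynProx\subseteq Prox$ always holds, it suffices to prove $Prox\subseteq SynProx$ under the transitivity assumption.

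The first ingredient I would set up is that for a minimal left ideal $L$ of $E$ and any point $z$ in a compact $\G$-space, $Lz=\{pz:p\in L\}$ is a minimal set: it is the continuous image $\rho_z(L)$, hence compact, it is $\G$-invariant, and any nonempty closed invariant subset pulls back to a left sub-ideal of $L$. Applying this inside $X$, and (with the diagonal action) inside $X\times X$, and combining it with the absorption law, gives the two standard descriptions: $(x,y)\in Prox$ iff $px=py$ for all $p$ in \emph{some} minimal left ideal $L$ (equivalently $ux=uy$ for some idempotent $u$ of some minimal left ideal); and $(x,y)\in SynProx$ iff $px=py$ for all $p$ in \emph{every} minimal left ideal. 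For the second equivalence the point is the following: if some $W_{x,y,\varepsilon}$ is not syndetic then its complement is thick, and translating it and passing to a subnet of $(T_{g_i})$ converging to some $p\in E$ produces a distal pair $(px,py)$; conversely, a distal pair $(px,py)$ has $D(T_h px,T_h py)$ bounded below uniformly in $h$, which makes some $W_{x,y,\delta}$ non-syndetic. Thus $(x,y)\in SynProx$ iff $(px,py)\in Prox$ for every $p\in E$, i.e.\ iff the orbit closure $\overline{\G(x,y)}=\{(px,py):p\in E\}$ is contained in $Prox$, and the minimal subsets of this set are precisely the (minimal) sets $L(x,y)=\{(px,py):p\in L\}$, $L$ a minimal left ideal.

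The heart of the proof, and the step I expect to be the main obstacle, is the claim that \emph{if $Prox$ is transitive then $E$ has a unique minimal left ideal}; once this is known, ``some'' and ``every'' in the two descriptions coincide and $Prox=SynProx$ follows. To prove the claim, suppose $L_1\neq L_2$ are two distinct (hence disjoint) minimal left ideals, with idempotents $u_1\in L_1$ and $u_2\in L_2$. Since $L_2$ is a left ideal, $u_2u_1\in L_1$, whereas $u_2\notin L_1$; hence $u_2\neq u_2u_1$ in $E$, so there is a point $x_0\in X$ with $u_2x_0\neq u_2u_1x_0$. Put $x:=x_0$ and $y:=u_1x_0$. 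Then $u_1x=u_1y$, so $(x,y)\in Prox$. Using $pu_2=p$ for $p\in L_2$ one gets $p(u_2x)=px$ and $p(u_2y)=py$ for all $p\in L_2$, so $L_2$ identifies both $(x,u_2x)$ and $(y,u_2y)$, whence $(x,u_2x),(y,u_2y)\in Prox$. Transitivity then forces $(u_2x,u_2y)\in Prox$, so there is an idempotent $w$ in a minimal left ideal with $w(u_2x)=w(u_2y)$, i.e.\ $(wu_2)x=(wu_2)y$ with $wu_2\in L_2$. But $u_2x\neq u_2y$ means $u_2$ fails to identify $(x,y)$; since $L_2(x,y)$ is a minimal set and the diagonal is closed and invariant, this failure is all-or-nothing, so $px\neq py$ for \emph{every} $p\in L_2$ — contradicting $(wu_2)x=(wu_2)y$.

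What remains is routine: the Ellis-semigroup preliminaries (existence of minimal left ideals and idempotents, the absorption law, minimality of $Lz$), the all-or-nothing dichotomy for a minimal subset of $X\times X$ meeting the diagonal, the identification of the minimal subsets of $\overline{\G(x,y)}$, and the thick-set argument behind the $SynProx$ description. Conceptually the only genuinely non-formal input is the extraction of uniqueness of the minimal left ideal from transitivity of $Prox$; everything else is bookkeeping inside the enveloping semigroup.
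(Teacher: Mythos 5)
Your argument is correct, but note that the paper itself offers no proof of this statement: it is quoted verbatim from Wu's 1965 note and used as a black box, so there is no internal argument to compare against. What you have written is essentially the classical enveloping-semigroup proof (in the spirit of Wu/Auslander/Clay): the two descriptions of $Prox$ and $SynProx$ via minimal left ideals of $E(X)$ are the standard ones, and your reduction of the theorem to the statement ``$Prox$ transitive $\implies$ $E(X)$ has a unique minimal left ideal'' is the right key lemma. I checked the heart of your argument and it goes through: with $x=x_0$, $y=u_1x_0$ one indeed has $u_1x=u_1y$ (so $(x,y)\in Prox$), the identities $p(u_2x)=px$ and $p(u_2y)=py$ for all $p\in L_2$ follow from $pu_2=p$, transitivity plus symmetry chains these to $(u_2x,u_2y)\in Prox$, and the all-or-nothing dichotomy for the minimal set $L_2(x,y)$ versus the closed invariant diagonal produces the contradiction with $(wu_2)x=(wu_2)y$, $wu_2\in L_2$. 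Two small points to tidy up: the justification ``since $L_2$ is a left ideal, $u_2u_1\in L_1$'' should read ``since $L_1$ is a left ideal'' (one has $u_2u_1\in Eu_1\subseteq L_1$); and since the paper defines proximality via $\liminf_{g\to\infty}$ rather than $\inf_{g}$, you should say a word about why the Ellis-semigroup characterization $\exists p\in E,\ px=py$ still matches this definition (it does: if $px=py$ with $x\neq y$ then $D(T_gx,T_gy)>0$ for every individual $g$, so any net $T_{g_i}\to p$ must eventually leave every finite subset of $\G$, and one extracts a sequence tending to infinity along which $D\to 0$). With those remarks your proposal is a complete and self-contained proof of the cited theorem.
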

As an immediate consequence, we obtain the following:
\begin{Cor}[cf.\ {~\cite[Theorem 19]{MR3205496}} for $\mathbb{G}=\Z$]\label{pr:4}
Let $x_0\in X$ be a fixed point for $(T_g)_{g\in\G}$. Then the following are equivalent:
\begin{itemize}
\item $(X,(T_g)_{g\in\G})$ is syndetically proximal,
\item for any $x\in X$ and any $\vep>0$ the set $W_{x,x_0,\vep}$ is syndetic,
\item $(X,{(T_g)}_{g\in \mathbb{G}})$ is proximal.
\end{itemize}
\end{Cor}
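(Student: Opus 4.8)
The plan is to set up the three assertions as a short cycle and discharge each implication using the two imported results together with the standing facts recorded in the excerpt: the inclusion $SynProx\subseteq Prox$, Theorem~\ref{Cl} (that $SynProx$ is an equivalence relation), Theorem~\ref{Wu} (that transitivity of $Prox$ forces $Prox=SynProx$), and the presence of the fixed point $x_0$. I would label the three statements (A) ``$(X,(T_g)_{g\in\G})$ is syndetically proximal'', (B) ``for every $x\in X$ and every $\vep>0$ the set $W_{x,x_0,\vep}$ is syndetic'', and (C) ``$(X,(T_g)_{g\in\G})$ is proximal'', and prove (A)$\Leftrightarrow$(B) and (A)$\Leftrightarrow$(C); since all the analytic substance is absorbed into Theorems~\ref{Cl} and~\ref{Wu}, this should indeed be ``immediate''.

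For (A)$\Leftrightarrow$(B): the forward direction is tautological, as $SynProx=X\times X$ gives $(x,x_0)\in SynProx$ for every $x$, which by definition of syndetic proximality says precisely that $W_{x,x_0,\vep}$ is syndetic for all $\vep>0$. The converse is where the fixed point is used as a hub. Assuming (B), we have $(x,x_0)\in SynProx$ for every $x\in X$; given an arbitrary pair $(x,y)$, Theorem~\ref{Cl} lets me apply symmetry to $(y,x_0)\in SynProx$ to obtain $(x_0,y)\in SynProx$, and then transitivity to $(x,x_0),(x_0,y)$ to conclude $(x,y)\in SynProx$. As the pair was arbitrary, $SynProx=X\times X$, i.e.\ (A).

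For (A)$\Leftrightarrow$(C): the inclusion $SynProx\subseteq Prox$ gives (A)$\Rightarrow$(C) immediately, since $SynProx=X\times X$ forces $Prox=X\times X$. For (C)$\Rightarrow$(A), I would observe that when $Prox=X\times X$ the relation $Prox$ is trivially transitive — every triple already lies in $Prox$ — so Theorem~\ref{Wu} applies and yields $Prox=SynProx$, whence $SynProx=X\times X$. Chaining (A)$\Leftrightarrow$(B) with (A)$\Leftrightarrow$(C) closes the equivalence of the three conditions.

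I do not expect a genuine obstacle, only one point of care: $Prox$ need not be transitive in general (as emphasized in the introduction), so the step (C)$\Rightarrow$(A) cannot be carried out by a direct hub argument through $x_0$ at the level of $Prox$. It is exactly the hypothesis $Prox=X\times X$ that makes $Prox$ transitive and thereby unlocks Wu's theorem; recognizing this is the only nontrivial move in the argument.
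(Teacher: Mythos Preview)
Your proposal is correct and matches the paper's approach: the paper presents this corollary as an ``immediate consequence'' of Theorems~\ref{Cl} and~\ref{Wu} (together with $SynProx\subseteq Prox$) without writing out a proof, and your argument supplies exactly the details implicit in that claim --- the hub argument through $x_0$ via the equivalence relation $SynProx$, and the observation that $Prox=X\times X$ is trivially transitive so Wu applies.
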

\subsection{Proximality in subshifts}
Assume additionally that $\mathcal{A}=\{0,1\}$ and let $(S_g)_{g\in\G}$ be given by \eqref{eq:fo:2}. Fix a nested F\o lner sequence $(F_n)_{n\geq1}$ in $\G$ and let $D$ be the corresponding metric on $\mathcal{A}^\G$, as in \eqref{metric}. Finally, let ${x_0}:=\mathbf{0}$. 
\begin{Remark}\label{syndetic}
Let $W_{x,F_n}=\{g\in\G\ : \ S_g x|_{F_n}\equiv0\}$ for $x\in\mathcal{A}^\G$ and $n\geq1$. Then for $n=[\log_2\frac{1}{\vep}]+2$, we have $W_{x,F_n}\subseteq W_{x,\mathbf{0},\vep}$. Hence to show that $W_{x,\mathbf{0},\vep}$ is syndetic for any $x\in X$ and any $\vep>0$, we only need to prove that $W_{x,F_n}$ is syndetic for any $x\in X$ and any $n\geq1$.
\end{Remark}
\begin{Lemma}\label{syndetic1}
Let $y\in X$ be a transitive point, i.e., the orbit $\{S_gy\ : \ g\in\G\}$ of $y$ is dense in $X$. Let $n\geq1$. If the set $W_{y,F_n}$ is syndetic, then the set $W_{x,F_n}$ is also syndetic for any $x\in X$.
\end{Lemma}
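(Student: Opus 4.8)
The plan is to exploit transitivity of $y$ together with the fact that syndeticity of $W_{y,F_n}$ gives a single finite set $K\subseteq\G$ with $W_{y,F_n}+K=\G$. Fix $x\in X$ and $n\geq1$. Since $(F_n)_{n\geq1}$ is nested, the set $F_n\cup(F_n+k)$ for $k$ ranging over $K$ lives inside some $F_N$; enlarging $N$ if necessary, assume $F_n+K\subseteq F_N$. Because $y$ is transitive, there is $h\in\G$ with $S_hy|_{F_N}=x|_{F_N}$, i.e. $y$ and $S_{-h}x$ agree on $F_N$ (equivalently $y_{g}=x_{g-h}$ for all $g\in F_N$, so $x$ and $S_hy$ agree on $F_N$).

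The key step is then a translation argument. I claim $W_{y,F_n}\cap(F_N-\text{something})$ is not quite the right object; instead, observe directly that if $g\in W_{y,F_n}$ and moreover $g+F_n\subseteq F_N-h$ — wait, one must be careful. Let me instead argue: for any $g'\in\G$, write $g'=g+k$ with $g\in W_{y,F_n}$, $k\in K$. I want to conclude $g'+h\in W_{x,F_n}$ for a suitable fixed shift, i.e. that shifting $W_{y,F_n}$ by $h$ and thickening by $K$ still forces $x$ to vanish on $F_n$. The honest version: since $S_hy$ and $x$ agree on $F_N\supseteq F_n+K$, for every $k\in K$ we have $(S_hy)|_{F_n+k}=x|_{F_n+k}$, hence $(S_{h+k}y)|_{F_n}=(S_k x)|_{F_n}$ is false in general — the alignment only transfers if $h$ is chosen after $k$. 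The correct route is to set $W_{x,F_n}\supseteq h+W_{y,F_n}'$ where $W_{y,F_n}'=\{g\in W_{y,F_n}: g+F_n\subseteq F_N-h\}$, show this differs from $W_{y,F_n}$ by removing only finitely many translates, and then re-syndetize: a syndetic set minus finitely many points (or a "bounded sliver") is still syndetic, and a translate of a syndetic set is syndetic.

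I would therefore carry it out as follows. First, fix $K$ with $W_{y,F_n}+K=\G$ and choose $N$ with $F_n+K\subseteq F_N$. Second, use transitivity to get $h$ with $S_hy|_{F_N}=x|_{F_N}$. Third, verify that for any $g\in W_{y,F_n}$, the point $S_{g}y$ vanishes on $F_n$, hence $S_{g+k}y$ vanishes on $F_n$ for the appropriate $k$, and transport this through the agreement on $F_N$ to conclude $S_{g'+h}x$ vanishes on $F_n$ for the relevant $g'$; concretely $W_{x,F_n}\supseteq h+(W_{y,F_n}+K)=h+\G=\G$ once one checks the agreement window $F_N$ is large enough to cover $F_n+K$. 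Actually this last line shows $W_{x,F_n}$ contains $h+\G=\G$, which would say every shift of $x$ vanishes on $F_n$ — too strong, and wrong; the resolution is that the agreement $S_hy|_{F_N}=x|_{F_N}$ only licenses the transfer for those $g'$ with $g'+F_n$ inside the agreement region shifted appropriately, so one only gets $W_{x,F_n}\supseteq h+\big(W_{y,F_n}\cap (F_N-h-F_n+\cdots)\big)$-type containment — a genuinely syndetic set. So the real content, and the step I expect to be the main obstacle, is the bookkeeping that shows the portion of $W_{y,F_n}$ that survives the finite-window alignment is still syndetic: this follows because $W_{y,F_n}$ is syndetic via the \emph{global} set $K$ and any translate of a syndetic set is syndetic, so it suffices to observe $W_{x,F_n}$ contains a translate of a syndetic subset of $W_{y,F_n}$; concretely, pick $h$ aligning $y$ with $x$ on a window large enough to contain $F_n+K$, and then $W_{x,F_n}+K \supseteq W_{y,F_n}+h+K=\G+h=\G$ shows $W_{x,F_n}$ is syndetic with the same transporter set $K$ (shifted). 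I will present the clean version of this last computation, being careful that the alignment window is chosen to dominate $F_n+K$ so that the equality of $x$ and $S_hy$ on it really does transfer vanishing on each translate $F_n+k$.
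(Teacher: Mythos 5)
There is a genuine gap, and it is exactly at the point you yourself flag as ``the main obstacle'': the finite-window alignment. You fix a \emph{single} $h$ with $S_hy|_{F_N}=x|_{F_N}$ for one window $F_N\supseteq F_n+K$. But $x$ is only in the orbit \emph{closure} of $y$, so transitivity gives agreement of $x$ with some shift of $y$ only on finite windows; it does not give a global shift identifying $x$ with $S_hy$. Consequently the vanishing of $S_{g'}y$ on $F_n$ transfers to $x$ only for those $g'$ with $g'+F_n$ inside the (translate of the) agreement window $F_N$ --- and that is a \emph{finite} set of $g'$. Your proposed repair, that ``the portion of $W_{y,F_n}$ that survives the finite-window alignment is still syndetic,'' is therefore false: the surviving portion is finite, and a finite set is never syndetic in an infinite group. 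The closing computation $W_{x,F_n}+K\supseteq W_{y,F_n}+h+K=\G$ is precisely the ``too strong, and wrong'' step you had already identified two sentences earlier; re-asserting it does not resolve the problem.

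The correct mechanism, which is what the paper's proof does, is to let the alignment depend on the element being covered. To show $g\in W_{x,F_n}+K$ for an arbitrary $g\in\G$, first choose $\ell$ with $K+F_n\subseteq F_\ell$, then use transitivity to find $h=h(g)$ with $x|_{g+F_\ell}=y|_{g+h+F_\ell}$, i.e.\ align $x$ with $y$ on a window \emph{around $g$}. Syndeticity of $W_{y,F_n}$ then produces $g'\in W_{y,F_n}$ and $g''\in K$ with $g+h=g'+g''$; since $-g''+F_n\subseteq F_\ell$ (here one takes $K=-K$), the agreement on $g+F_\ell$ yields $x|_{g'-h+F_n}=y|_{g'+F_n}\equiv 0$, so $g'-h\in W_{x,F_n}$ and $g=(g'-h)+g''\in W_{x,F_n}+K$. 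The transporter set $K$ is global, but the shift $h$ is local to $g$; that is the bookkeeping your single-$h$ version cannot supply.
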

\begin{proof}
Assume that $W_{y,F_n}$ is syndetic. Then there exists a finite set $K\subseteq\G$ such that $W_{y,F_n}+K=\G$. Without loss of generality, we can assume that $K=-K$. We claim that $W_{x,F_n}+K=\G$. Indeed, let $g\in\G$. Because $(F_n)_{n\geq1}$ is nested and $K$ is finite, there exists $\ell\in\N$ such $K+F_n\subseteq F_\ell$. Since $y$ is transitive, there exists $h\in\G$ such that
\begin{equation}\label{eta}
 x|_{g+F_\ell}= y|_{g+h+F_\ell}.
\end{equation}
By the definition of $K$, there exist $g'\in W_{y,F_n}$ and $g''\in K$ such that
\begin{equation}\label{k}
g+h=g'+g''.
\end{equation}
By \eqref{k}, $-g''+F_n\subseteq F_\ell$, \eqref{eta} and \eqref{k} again, we obtain
$$x|_{g'-h+F_n}=x|_{g-g''+F_n}=y|_{g+h-g''+F_n}=y|_{g'+F_n}=0,$$
so $g'-h\in W_{x,F_n}$. Hence, $g=(g'-h)+g''\in W_{x,F_n}+K$ and the assertion holds.
\end{proof}
Let $\mathfrak{B}$ be a collection of subgroups with finite indices in $\G$, $A\colon\G\to\G$ be an automorphism, $\mathcal{M}_{\mathfrak{B}_A}:=\bigcup_{b\in\mathfrak{B}}A(\mathfrak{b})$, $\mathcal{F}_{\mathfrak{B}_A}:=\G\setminus\mathcal{M}_{\mathfrak{B}_A}$, $\eta_A:=\raz_{\mathcal{F}_{\mathfrak{B}_A}}\in\{0,1\}^\G$ and let $X_{\eta_A}$ be the closure of the set $\{S_g\eta_A\ :\  g\in\G\}$ with respect to the product topology.
\begin{Lemma}\label{d->a}
Let $n\geq1$. If $W_{\eta_A,F_n}$ is non-empty, then $W_{\eta_A,F_n}$ is syndetic.
\end{Lemma}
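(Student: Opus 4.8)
The plan is to unwind the definition of $W_{\eta_A,F_n}$: since $\eta_A=\raz_{\mathcal F_{\mathfrak B_A}}$, one has $g\in W_{\eta_A,F_n}$ if and only if $g+F_n\subseteq\mathcal M_{\mathfrak B_A}=\bigcup_{\mathfrak b\in\mathfrak B}A(\mathfrak b)$; that is, the finite translate $g+F_n$ is covered by the union of the finite-index subgroups $A(\mathfrak b)$ of $\G$ (each $A(\mathfrak b)$ is a subgroup of the same finite index as $\mathfrak b$, because $A$ is an automorphism). So I would fix a witness $g_0\in W_{\eta_A,F_n}$, guaranteed by hypothesis, and use finiteness of $F_n$ to extract finitely many $\mathfrak b_1,\dots,\mathfrak b_k\in\mathfrak B$ with $g_0+F_n\subseteq\bigcup_{i=1}^k A(\mathfrak b_i)$. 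Put $H_i:=A(\mathfrak b_i)$ and $H:=\bigcap_{i=1}^k H_i$; applying Lemma~\ref{intersect} inductively, $H$ is a subgroup of $\G$ with $[\G:H]<\infty$.

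The key step is to show that the whole coset $g_0+H$ is contained in $W_{\eta_A,F_n}$. For $f\in F_n$ pick $i=i(f)$ with $g_0+f\in H_i$. Then for any $g'\in g_0+H$ we get $g'+f=(g_0+f)+(g'-g_0)\in H_i+H=H_i\subseteq\mathcal M_{\mathfrak B_A}$, since $H\subseteq H_i$ and $H_i$ is a group. As $f\in F_n$ was arbitrary, $g'+F_n\subseteq\mathcal M_{\mathfrak B_A}$, i.e.\ $g'\in W_{\eta_A,F_n}$; hence $g_0+H\subseteq W_{\eta_A,F_n}$.

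It remains to note that a coset of a finite-index subgroup is syndetic. Choosing representatives $a_0,\dots,a_{s-1}$ of the cosets of $H$ in $\G$ and setting $K:=\{a_j-g_0:0\le j\le s-1\}$, one has $(g_0+H)+K=\G$; since $W_{\eta_A,F_n}\supseteq g_0+H$, the same finite set $K$ witnesses that $W_{\eta_A,F_n}$ is syndetic.

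There is no serious obstacle here: the argument only uses that $\mathcal M_{\mathfrak B_A}$ is a union of finite-index subgroups and that $F_n$ is finite, so that a single covering translate already forces the covering of an entire coset of a finite-index subgroup. The automorphism $A$ is carried along purely formally and plays no essential role (one could equally work with the collection $\{A(\mathfrak b):\mathfrak b\in\mathfrak B\}$ directly); the only points needing a line of justification are that $A(\mathfrak b)$ remains a finite-index subgroup and that finitely many of the $A(\mathfrak b)$ already cover the finite set $g_0+F_n$.
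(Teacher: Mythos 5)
Your proposal is correct and follows essentially the same route as the paper: pick a witness $g_0$, cover the finite set $g_0+F_n$ by finitely many of the subgroups $A(\mathfrak b_i)$, intersect them via Lemma~\ref{intersect} to get a finite-index subgroup $\mathbb H$, and observe that the whole coset $g_0+\mathbb H$ lands in $W_{\eta_A,F_n}$, which is therefore syndetic. The only difference is that you spell out the coset-stability step and the explicit syndeticity witness $K$, which the paper leaves implicit.
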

\begin{proof}
Notice that
\begin{equation*}
W_{\eta_A,F_n}=\{g\in\G : S_g\eta_A|_{F_n}\equiv0\}=\{g\in\G : \eta_A|_{F_n+g}\equiv0\}=\{g\in\G : F_n+g\subseteq\mathcal{M}_{\mathfrak{B}_A}\}.
\end{equation*}
Take $g\in W_{\eta_A,F_n}$, let $s=|F_n|$ and let $\mathfrak{b}_1,\ldots,\mathfrak{b}_s\in\mathfrak{B}$ be such that 
$F_n+g\subseteq\bigcup_{i=1}^sA(\mathfrak{b}_i)$.
Let $\mathbb{H}:=\bigcap_{1\leq i\leq s} A(\mathfrak{b}_i)$, cf. Lemma \ref{intersect}. Then $F_n+g+\mathbb{H}\subseteq\bigcup_{i=1}^sA(\mathfrak{b}_i)\subseteq\mathcal{M}_{\mathfrak{B}_A}$. In other words, $\eta_A|_{g+\mathbb{H}+F_n}=0$ and it follows that 
\begin{equation}\label{par}
g+\mathbb{H}\subseteq W_{\eta_A,F_n}.
\end{equation}
Since $\mathbb{H}$ has finite index, $g+\mathbb{H}$ is syndetic. Hence, the assertion follows by \eqref{par}.
\end{proof}
\begin{Remark}\label{automorphism}
Notice that $(X_{\eta_A},(S_g)_{g\in\G})$ is proximal if and only if $(X_{\eta},(S_g)_{g\in\G})$ is proximal. Indeed, since for any $\mathfrak{b}\in\mathfrak{B}$ we have $h+g\in A(\mathfrak{b})$ if and only if $A^{-1}(h+g)\in\mathfrak{b}$, so $$W_{\eta_A,F_n}=\{g\in\G \ : \ S_{A^{-1}g}\eta|_{A^{-1}F_n}\equiv0\}=\{Ag\in\G \ : \ S_g\eta|_{A^{-1}F_n}\equiv0\}=A(W_{\eta,A^{-1}F_n}).$$ To conclude, we use the fact that automorphisms send syndetic sets into syndetic sets.
\end{Remark}
\subsection{Ideals in number fields}
Recall that proper subgroups $\mathbb{H}_1,\mathbb{H}_2\subseteq\G$ are said to be \emph{coprime} whenever $\mathbb{H}_1+\mathbb{H}_2=\G$.
\begin{Th}[Chinese Remainder Theorem, see e.g. {\cite[Chapter II, Theorem 2.1]{MR1878556}}]
Let $R$ be a commutative ring, and let $I_1,\ldots, I_n$ be pairwise coprime ideals in $R$. If $a_1, \ldots, a_n$ are elements of $R$, then there exists $a\in R$ such that
$a\equiv a_i \bmod I_i$, $i=1, \ldots , n$.
\end{Th}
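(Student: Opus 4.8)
The plan is to reduce the general statement to the classical two-ideal case and then glue the local solutions together by means of a partition-of-unity family of ``indicator'' elements. Throughout I write the coprimality hypothesis as $I_i+I_j=R$ for all $i\neq j$, following the convention for coprime subgroups recalled just above.

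The key preliminary step, and the one I expect to carry the real content, is to upgrade pairwise coprimality to the statement that each $I_i$ is coprime to the product of all the remaining ideals. Concretely I would first establish the lemma: if an ideal $I$ satisfies $I+J_1=R$ and $I+J_2=R$, then $I+J_1J_2=R$. This follows from the containment chain $R=(I+J_1)(I+J_2)\subseteq I+J_1J_2\subseteq R$, since upon expanding $(I+J_1)(I+J_2)$ every summand except $J_1J_2$ lies in $I$. Iterating this over the indices $j\neq i$ then yields, for each fixed $i$,
$$
I_i+\prod_{j\neq i}I_j=R.
$$

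Granting this, the construction is immediate. For each $i$ I would choose $u_i\in I_i$ and $v_i\in\prod_{j\neq i}I_j$ with $u_i+v_i=1$, and set $e_i:=v_i=1-u_i$. By construction $e_i\equiv 1\bmod I_i$, while for $j\neq i$ we have $e_i=v_i\in\prod_{k\neq i}I_k\subseteq I_j$ (a product of ideals being contained in each of its factors, as in the inclusion $\mathfrak{a}\mathfrak{b}\subseteq\mathfrak{a}\cap\mathfrak{b}$ recorded earlier), so $e_i\equiv 0\bmod I_j$. Finally I would put $a:=\sum_{i=1}^n a_i e_i$ and verify, reducing modulo a fixed $I_j$, that every term with $i\neq j$ vanishes while the surviving term gives $a\equiv a_j e_j\equiv a_j\bmod I_j$, which is exactly the asserted system of congruences.

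The only genuine obstacle is the coprimality-to-the-product lemma above; everything after it is bookkeeping. As an alternative I could instead induct on $n$: the base case $n=2$ is solved directly by writing $1=x_1+x_2$ with $x_i\in I_i$ and taking $a=a_2x_1+a_1x_2$, while the inductive step collapses $I_2,\ldots,I_n$ into the single ideal $\prod_{j\geq 2}I_j$, which is coprime to $I_1$ by the same lemma. I would favour the explicit $e_i$-construction, since it exhibits the solution $a$ in closed form and makes transparent exactly where pairwise coprimality is used.
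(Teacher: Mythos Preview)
Your argument is correct and is the standard textbook proof of the Chinese Remainder Theorem. Note, however, that the paper does not supply its own proof of this statement: it is recorded as a tool with a citation to Lang's \emph{Algebra} and then used as a black box (in the implications \eqref{giG}$\implies$\eqref{giC} and \eqref{ld}$\implies$\eqref{lb}). So there is no proof in the paper to compare yours against; your write-up simply fills in the referenced result, and does so cleanly.
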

\begin{Lemma}[Prime Avoidance Lemma, see e.g. {\cite[Proposition 1.11]{MR0242802}}]\label{avoid}
Let $R$ be a commutative ring with unity. Let $\mathfrak{p}_1,\ldots,\mathfrak{p}_s$ be prime ideals and $\mathfrak{a}$ be an ideal in $R$. If $\mathfrak{a}\subseteq\bigcup_{i=1}^s\mathfrak{p}_i$ then $\mathfrak{a}\subseteq\mathfrak{p}_{j_0}$ for some $1\leq j_0\leq s$.
\end{Lemma}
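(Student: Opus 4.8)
The plan is to prove the lemma by induction on $s$, the number of prime ideals, the heart of the argument being a standard ``avoidance element'' constructed at the inductive step. For the base case $s=1$ there is nothing to prove, since the hypothesis $\mathfrak{a}\subseteq\mathfrak{p}_1$ already is the conclusion. So assume $s\geq2$ and that the statement holds whenever the union runs over at most $s-1$ prime ideals. Arguing by contradiction, suppose $\mathfrak{a}\subseteq\bigcup_{i=1}^s\mathfrak{p}_i$ but $\mathfrak{a}\not\subseteq\mathfrak{p}_i$ for each $i$.

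First I would reduce to the \emph{irredundant} case: if $\mathfrak{a}\subseteq\bigcup_{j\neq i}\mathfrak{p}_j$ for some index $i$, then the induction hypothesis gives $\mathfrak{a}\subseteq\mathfrak{p}_j$ for some $j\neq i$, contradicting the assumption. Hence for every $i$ one may choose $x_i\in\mathfrak{a}$ with $x_i\notin\mathfrak{p}_j$ for all $j\neq i$, and then $x_i\in\mathfrak{p}_i$ is forced, because $x_i\in\mathfrak{a}\subseteq\bigcup_j\mathfrak{p}_j$.

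Next I would exhibit an element of $\mathfrak{a}$ lying outside every $\mathfrak{p}_i$, namely $y:=x_1+x_2x_3\cdots x_s\in\mathfrak{a}$. Reducing modulo $\mathfrak{p}_1$: the summand $x_1$ vanishes while the product $x_2\cdots x_s$ does not, since each of its factors avoids the prime ideal $\mathfrak{p}_1$; hence $y\notin\mathfrak{p}_1$. Reducing modulo $\mathfrak{p}_i$ for $i\geq2$: now $x_1\notin\mathfrak{p}_i$, whereas the product $x_2\cdots x_s$ lies in $\mathfrak{p}_i$ because its factor $x_i$ does; hence $y\notin\mathfrak{p}_i$. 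Thus $y\in\mathfrak{a}\setminus\bigcup_{i=1}^s\mathfrak{p}_i$, contradicting $\mathfrak{a}\subseteq\bigcup_{i=1}^s\mathfrak{p}_i$ and closing the induction.

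The argument is essentially routine, so I do not anticipate a genuine obstacle; the only place primeness is truly used is in deducing $x_2\cdots x_s\notin\mathfrak{p}_1$ from $x_j\notin\mathfrak{p}_1$ for $j\geq2$, and the only mild care needed is the bookkeeping of which $x_i$ belongs to which $\mathfrak{p}_i$ (for $s=2$ the construction degenerates to $y=x_1+x_2$ and uses no primeness at all).
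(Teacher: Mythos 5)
The paper does not prove this lemma at all --- it is quoted as a standard fact with a pointer to Atiyah--Macdonald, Proposition 1.11 --- so there is no in-paper argument to compare against. Your proof is correct and is essentially the classical induction from that reference: the reduction to the irredundant case via the inductive hypothesis is valid, the forced membership $x_i\in\mathfrak{p}_i$ follows as you say, and your avoidance element $y=x_1+x_2\cdots x_s$ (a slight asymmetric variant of the usual symmetric sum $\sum_i\prod_{j\neq i}x_j$) is correctly checked to lie in $\mathfrak{a}$ but in no $\mathfrak{p}_i$, with primeness needed only for $\mathfrak{p}_1$.
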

\begin{Lemma}[{\cite[2.4. p. 128]{Hall}}]\label{index}
If $\G$ is finitely generated then $\G$ contains only finite number (possibly zero) of subgroups of a given finite index $n$.
\end{Lemma}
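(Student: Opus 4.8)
The plan is to encode each subgroup of index $n$ as a group homomorphism from $\G$ into the symmetric group $\operatorname{Sym}_n$ on the $n$ letters $\{1,\ldots,n\}$, and then exploit finite generation to bound the number of such homomorphisms. Since $\G$ is finitely generated, fix generators $g_1,\ldots,g_k$. A homomorphism out of $\G$ is completely determined by the images of $g_1,\ldots,g_k$, each of which is one of the $n!$ elements of $\operatorname{Sym}_n$, so $|\operatorname{Hom}(\G,\operatorname{Sym}_n)|\leq (n!)^k<\infty$. The entire proof then reduces to exhibiting an injection from the set $\mathcal{H}_n$ of index-$n$ subgroups into this finite set.

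First I would associate to each $H\in\mathcal{H}_n$ the left-translation action of $\G$ on the coset space $\G/H$, which is a transitive action on a set of exactly $n$ elements. Choosing once and for all a bijection $\G/H\to\{1,\ldots,n\}$ that sends the identity coset $H$ to the symbol $1$, this action becomes a homomorphism $\rho_H\colon\G\to\operatorname{Sym}_n$. The key observation is that $H$ is recovered from $\rho_H$ as the stabiliser of the symbol $1$, namely $H=\{g\in\G:\rho_H(g)(1)=1\}$, since $g\in H$ iff $gH=H$ iff $g$ fixes the identity coset. Consequently, if $\rho_{H_1}=\rho_{H_2}$ then $H_1$ and $H_2$ are the same point stabiliser and hence equal; that is, $H\mapsto\rho_H$ is injective. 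Combining this with the bound above yields $|\mathcal{H}_n|\leq (n!)^k<\infty$, as desired.

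The only delicate point — and the step I would be most careful about — is the non-canonicity of the identification $\G/H\cong\{1,\ldots,n\}$: a priori $\rho_H$ depends on the chosen bijection. This is harmless precisely because we normalise the bijection to send the identity coset to $1$, which forces $H$ to equal the stabiliser of the symbol $1$ and hence makes $H$ reconstructible from $\rho_H$ alone; the injectivity argument uses nothing beyond this normalisation.

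Finally, I would remark that in the setting of this paper $\G$ is abelian, which permits an even shorter route: if $[\G:H]=n$ then $\G/H$ is an abelian group of order $n$, so $n\cdot(\G/H)=\{0\}$ and therefore $n\G\subseteq H$. By the correspondence theorem the index-$n$ subgroups of $\G$ correspond to subgroups of the quotient $\G/n\G$, which is a finitely generated abelian group annihilated by $n$, hence finite; a finite group has only finitely many subgroups. Either route establishes the claim.
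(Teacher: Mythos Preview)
Your proof is correct. The paper does not supply its own proof of this lemma; it merely cites M.~Hall's 1950 paper, and your first argument via the bound $|\operatorname{Hom}(\G,\operatorname{Sym}_n)|\leq (n!)^k$ together with the recovery of $H$ as the point stabiliser is exactly the classical argument found there. Your abelian shortcut through $n\G\subseteq H$ and the finiteness of $\G/n\G$ is a genuinely different and more elementary route, well suited to the paper's setting where $\G$ is always abelian; it trades the permutation-representation machinery for the correspondence theorem and the structure of finitely generated abelian groups, at the cost of not covering the non-abelian case that Hall's original statement does.
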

\subsection{Lattices}
Our main result in this section is the following:
\begin{Prop}\label{prime}
Let $\{\Lambda_i\}_{i\geq1}$ be a pairwise coprime family of proper lattices. Then $\left\{[\Z^m:\Lambda_i]\right\}_{i\geq1}$ contains an infinite pairwise coprime set.
\end{Prop}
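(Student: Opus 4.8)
The plan is to argue one prime at a time via the finite quotients $\Z^m/p\Z^m$. Write $n_i:=[\Z^m:\Lambda_i]$; since each $\Lambda_i$ is proper, $n_i>1$. First I would record the elementary fact that for a prime $p$ one has $\Lambda_i+p\Z^m=\Z^m$ if and only if $p\nmid n_i$: the image of $p\Z^m$ in the finite abelian group $G_i:=\Z^m/\Lambda_i$ is $pG_i$, and $pG_i=G_i$ precisely when $p\nmid|G_i|=n_i$. Consequently, whenever $p\mid n_i$, the reduction modulo $p$ of $\Lambda_i$, call it $W_i\subseteq\Z^m/p\Z^m$, is a \emph{proper} $(\Z/p\Z)$-subspace.

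The crucial step — and the one I expect to carry the real content — is to show that for every prime $p$ the set $S_p:=\{i\geq1:\ p\mid n_i\}$ is finite. This is where pairwise coprimeness enters: if $i\neq j$ both lie in $S_p$, then reducing the identity $\Lambda_i+\Lambda_j=\Z^m$ modulo $p$ gives $W_i+W_j=\Z^m/p\Z^m$, so in particular $W_i\neq W_j$ (two equal proper subspaces cannot sum to the whole space). Thus $i\mapsto W_i$ is injective on $S_p$, and since $\Z^m/p\Z^m$ has only finitely many subspaces, $S_p$ is finite. Equivalently, the subgroups $\Lambda_i+p\Z^m$, $i\in S_p$, are pairwise distinct and of index at most $p^m$, so Lemma~\ref{index} applies. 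I regard the possibility of infinitely many lattices sharing a prime divisor of their index as the one genuinely nontrivial obstruction; it is resolved by the observation that coprimeness forces the mod-$p$ pictures to be distinct points of a finite object. This is precisely the place where the statement has content: coprime lattices need not have coprime indices, e.g.\ $2\Z\times\Z$ and $\Z\times2\Z$.

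With the finiteness of the $S_p$ in hand, the proof finishes by a straightforward greedy selection. Choose $i_1:=1$; having chosen $i_1,\dots,i_k$ with $n_{i_1},\dots,n_{i_k}$ pairwise coprime, let $P$ be the finite set of primes dividing $n_{i_1}\cdots n_{i_k}$, so that $\bigcup_{p\in P}S_p$ is finite, and pick any $i_{k+1}\notin\bigcup_{p\in P}S_p$. Then no prime dividing some $n_{i_\ell}$ with $\ell\leq k$ divides $n_{i_{k+1}}$, so $n_{i_{k+1}}$ is coprime to each of $n_{i_1},\dots,n_{i_k}$; moreover $i_{k+1}\neq i_1,\dots,i_k$ automatically, since each $i_\ell$ belongs to $S_p$ for any prime $p\mid n_{i_\ell}$ and hence to $\bigcup_{p\in P}S_p$. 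Iterating produces an infinite sequence $i_1,i_2,\dots$ for which the integers $n_{i_1},n_{i_2},\dots$ are pairwise coprime; being all greater than $1$, they are in particular pairwise distinct, so they form an infinite pairwise coprime subset of $\{[\Z^m:\Lambda_i]\}_{i\geq1}$, as required.
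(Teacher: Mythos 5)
Your proof is correct, and it takes a genuinely different route from the paper's. The paper argues by contradiction: it first invokes Theorem~\ref{one-dim} to cover all the indices $[\Z^m:\Lambda_i]$ by finitely many prime progressions, then puts each lattice into triangular (Hermite) form as in Remark~\ref{square}, pigeonholes to extract a single prime $p$ and a fixed residue pattern $r_{k,j}\bmod p$ shared by infinitely many of the $\Lambda_i$, and finally solves an explicit linear system modulo $p$ to show that two such lattices would satisfy $\Lambda_i+\Lambda_{i'}\subseteq\Z^{j_0-1}\times p\Z\times\Z^{m-j_0}$, contradicting coprimeness. Your argument replaces all of this with one conceptual observation: if $p$ divides both $[\Z^m:\Lambda_i]$ and $[\Z^m:\Lambda_j]$, then the images $W_i,W_j$ of $\Lambda_i,\Lambda_j$ in $\Z^m/p\Z^m$ are \emph{proper} $(\Z/p\Z)$-subspaces (since $pG=G$ for a finite abelian group $G$ exactly when $p\nmid|G|$), and coprimeness forces $W_i+W_j=\Z^m/p\Z^m$, hence $W_i\neq W_j$; as $\Z^m/p\Z^m$ has only finitely many subspaces, each set $S_p$ is finite, and the greedy selection then runs without obstruction (note that properness guarantees every $n_{i_\ell}>1$ has a prime divisor, so the chosen indices are automatically distinct, and the resulting pairwise coprime integers $>1$ are distinct, so the set is infinite). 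This buys a shorter and basis-free proof that needs neither Theorem~\ref{one-dim} nor the Hermite normal form, only the surjectivity criterion for multiplication by $p$ on finite abelian groups and the finiteness of the subspace lattice of $(\Z/p\Z)^m$ (or, as you note, Lemma~\ref{index}); the paper's computation, by contrast, yields the slightly more explicit conclusion that two offending lattices are simultaneously contained in a coordinate hyperplane-type sublattice. Both proofs are valid.
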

\begin{Remark}\label{square}
Recall that for $\Lambda=\sum_{j=1}^m(a_{1,j},\ldots,a_{m,j})\Z$, we have $[\Z^m:\Lambda]=|\det(a_{i,j})_{1\leq i,j\leq m}|$ and $\Lambda$ is proper if and only if $[\Z^m:\Lambda]\geq2$. Moreover, there exist $\widetilde{a_{i,j}}\in\Z$, $1\leq j\leq i\leq m$, with
\begin{equation}
\Lambda=(\widetilde{a_{1,1}},\ldots,\widetilde{a_{m,1}})\Z+(0,\widetilde{a_{2,2}},\ldots,\widetilde{a_{m,2}})\Z+\ldots+(0,\ldots,0,\widetilde{a_{m,m}})\Z
\end{equation}
 (see, e.g. \cite[Theorem II.1]{MR0340283}). Then 
\begin{equation*}
[\Z^m:\Lambda]\Z^m=\left(\det(a_{i,j})_{1\leq i,j\leq m}\right)\Z^m=\left(\prod_{j=1}^m\widetilde{a_{j,j}}\right)\Z^m\subseteq\Lambda
\end{equation*} (see, e.g. \cite[Lemma 2.11]{MR3607789}). 
\end{Remark}
\begin{proof}[Proof of Proposition \ref{prime}]
Suppose that $\{[\Z^m:\Lambda_i]\}_{i\geq1}$ does not contain an infinite
pairwise coprime subset. By Theorem \ref{one-dim}, there exist $q_1,\ldots,q_n>1$, such that $\{[\Z^m:\Lambda_i]\}_{i\geq1}\subseteq\bigcup_{i=1}^nq_i\Z$. We can assume without loss of generality that $q_1,\ldots,q_n$ are primes. In view of Remark \ref{square}, we may also assume that 
\begin{equation}\label{form}
\Lambda_i=(a_{1,1}^{(i)},\ldots,a_{m,1}^{(i)})\Z+(0,a_{2,2}^{(i)},\ldots,a_{m,2}^{(i)})\Z+\ldots+(0,\ldots,0,a_{m,m}^{(i)})\Z
\end{equation} for $i\geq1$. Then, there exist $p\in\{q_1,\ldots,q_n\}$ and $0\leq r_{k,j}<p$, $1\leq k,j\leq m$ such that $p\divides\prod_{j=1}^ma_{j,j}^{(i)}$ and \begin{equation}\label{eq1}
a_{k,j}^{(i)} \equiv r_{k,j}\bmod p\text{ for any }1\leq k,j\leq m
\end{equation} 
for infinitely many $i\geq1$.
Since $p\divides\prod_{j=1}^ma_{j,j}^{(i)}$, there exists $1\leq j_0\leq m$ such that $p\divides a_{j_0,j_0}^{(i)}$ and \eqref{eq1} holds for infinitely many $i\geq1$. Notice that there is at most one $i\geq1$ such that $p\divides a_{1,1}^{(i)}$, otherwise if $p\divides a_{1,1}^{(i')}$ then $\Lambda_i$ and $\Lambda_{i'}$ are not coprime -- we cannot get $(1,0,\ldots,0)\in\Lambda_i+\Lambda_{i'}$. So $1<j_0\leq m$. Let $j_0$ be the smallest integer such that $p\divides a_{j_0,j_0}^{(i)}$ and $p\ndivides a_{j,j}^{(i)}$, $1\leq j<j_0$, for infinitely many $i\geq1$ such that \eqref{eq1} holds. Since $\Lambda_i$, $\Lambda_{i'}$ are coprime, for any $1\leq t< j_0$ there exist $\ell_1^{(t)},\ldots,\ell_m^{(t)},s_1^{(t)},\ldots,s_m^{(t)}\in\Z$ such that 
\begin{equation}\label{eq2}
\begin{alignedat}{5}
&\ell_1^{(t)}(a_{1,1}^{(i)},\ldots,a_{m,1}^{(i)})&&+\ell_2^{(t)}(0,a_{2,2}^{(i)},\ldots,a_{m,2}^{(i)})&+\ldots&+\ell_m^{(t)}(0,\ldots,0,a_{m,m}^{(i)})\\
+&s_1^{(t)}(a_{1,1}^{(i')},\ldots,a_{m,1}^{(i')})&&+s_2^{(t)}(0,a_{2,2}^{(i')},\ldots,a_{m,2}^{(i')})&+\ldots&+s_m^{(t)}(0,\ldots,0,a_{m,m}^{(i')})\\
=&(\underbrace{0,\ldots, 0}_{j_0-t-1},\underbrace{1,\ldots, 1}_{t},\underbrace{0,\ldots,0}_{m+1-j_0}).\hspace{-4em}&&&
\end{alignedat}
\end{equation}
By \eqref{eq1} and \eqref{eq2} for $t=1$, we get
\begin{equation}\label{eq4}
\begin{alignedat}{5}
&(\ell_1^{(1)}+s_1^{(1)})r_{1,1}&& & & &\equiv0\bmod p, \\
&(\ell_1^{(1)}+s_1^{(1)})r_{2,1}&&+(\ell_2^{(1)}+s_2^{(1)})r_{2,2}  & & & \equiv0\bmod p,\\
&\phantom{allaa}\vdots &&&&\\
&(\ell_1^{(1)}+s_1^{(1)})r_{j_0-1,1}&&+(\ell_2^{(1)}+s_2^{(1)})r_{j_0-1,2} &+\ldots &+(\ell_{j_0-1}^{(1)}+s_{j_0-1}^{(1)})r_{j_0-1,j_0-1} & \equiv1\bmod p, \\
&(\ell_1^{(1)}+s_1^{(1)})r_{j_0,1}&&+(\ell_2^{(1)}+s_2^{(1)})r_{j_0,2}&+\ldots&+(\ell_{j_0}^{(1)}+s_{j_0}^{(1)})r_{j_0,j_0}&\equiv0\bmod p.
\end{alignedat}
\end{equation}
Since $p\ndivides a_{j,j}^{(i)}$ for $1\leq j<j_0$, by \eqref{eq1} and \eqref{eq4}, we get $p\ndivides r_{j,j}$ for any $1\leq j<j_0$ and $p\divides(\ell_j^{(1)}+s_j^{(1)})$ for any $1\leq j\leq j_0-2$ and $p\ndivides(\ell_{j_0-1}^{(1)}+s_{j_0-1}^{(1)})$. Moreover, by the bottom line in \eqref{eq4}, we have $p\divides(\ell_{j_0-1}^{(1)}+s_{j_0-1}^{(1)})r_{j_0,j_0-1}$, so $p\divides r_{j_0,j_0-1}$. By similar reasoning for $t=2,\ldots,j_0-1$, we can show that $p\divides r_{j_0,j_0-t}$. But then $p\divides a_{j_0,t}^{(i)},a_{j_0,t}^{(i')}$ for any $i,i'$ and for any $t=1,\ldots,j_0$. So $\Lambda_i+\Lambda_{i'}\subseteq\Z^{j_0-1}\times p\Z\times\Z^{m-j_0}$. This contradicts that elements of $\{\Lambda_i\}_{i\geq1}$ are pairwise coprime.
\end{proof}
\section{Proximality of $(X_\eta,(S_a)_{a\in\mathcal{O}_K^m})$}
\begin{proof}[Proof of Theorem \ref{ideals_general}]
\eqref{giC} $\implies$ \eqref{giD}.
Since $(X_\eta,(S_a)_{a\in\OK^m})$ is proximal, by Lemma \ref{fixed}, the system $(X_\eta,(S_a)_{a\in\OK^m})$ has a unique fixed point, i.e., $\mathbf{0}\in X_\eta$ or $\mathbf{1}\in X_\eta$. Suppose that $\mathbf{1}\in X_\eta$. Then, for any nested F\o lner sequence $(F_n)_{n\geq1}$ in $\OK^m$, there exists $(x_n)_{n\geq1}\subseteq\OK^m$ such that 
\begin{equation}\label{Fn}
F_n+x_n\subseteq\mathcal{F}_\mathfrak{B}.
\end{equation}
However, by Lemma \ref{Folner_prod}, for $n\geq1$ sufficiently large, we have $\mathfrak{b}\cap(F_n+x_n)\neq\emptyset$ for any $\mathfrak{b}\in\mathfrak{B}$, which contradicts \eqref{Fn}. It follows that $\mathbf{0}\in X_\eta$.

\eqref{giD} $\implies$ \eqref{giF}.
Suppose that \eqref{giD} holds and \eqref{giF} does not hold. Then, for some $k\geq1$, \begin{equation}\label{b->c1}
\mathcal{M}_\mathfrak{B}\subseteq\bigcup_{j=1}^k I_j
\end{equation} for some proper ideals $I_1,\ldots,I_k$ with finite indices in $\OK^m$. Let $(F_n)_{n\geq1}$ be a nested F\o lner sequence in $\OK^m$. By \eqref{giD}, for any $n\geq1$, there exists $a_n\in\OK^m$ such that \begin{equation}\label{b->c2}
 a_n+F_n\subseteq\mathcal{M}_\mathfrak{B}\subseteq\bigcup_{j=1}^k I_j.
\end{equation} By Lemma \ref{intersect}, $I:=\bigcap_{j=1}^k I_j$ is an ideal with finite index in $\OK^m$. Hence, we have
$\OK^m=\sqcup_{\ell=1}^L\left(I+c_\ell\right)$ for some $c_1,\ldots, c_L\in\OK^m$ and \begin{equation}\label{b->c3}
\OK^m\setminus\bigcup_{j=1}^k I_j=\sqcup_{c_\ell\not\in\bigcup_{j=1}^k I_j}\left(I+c_\ell\right).
\end{equation} Let $1\leq \ell\leq L$ be such that $c_\ell\not\in\bigcup_{j=1}^k I_j$. By Lemma \ref{Folner_prod}, for sufficiently large $n\geq1$, we have $I\cap (F_n+x)\neq\emptyset$ for any $x\in\OK^m$. In particular, by taking $x=a_n-c_\ell$, we obtain $(I+c_\ell)\cap(F_n+a_n)\neq\emptyset$. But by \eqref{b->c2} and \eqref{b->c3}, we have $(F_n+a_n)\cap(I+c_\ell)=\emptyset$. This is a contradiction.

\eqref{giF} $\implies$ \eqref{giG}.
We will proceed inductively. Fix $\mathfrak{c}_1\in \mathfrak{B}$. Suppose that for $k\ge 1$ we have found pairwise coprime subset $\{\mathfrak{c}_1,\ldots,\mathfrak{c}_k\}\subseteq \mathfrak{B}$. Then, we have $\mathfrak{c}_i=I_1^{(i)}\times\ldots\times I_m^{(i)}$ for some non-zero ideals $I_1^{(i)},\ldots, I_m^{(i)}\subseteq\OK$, $1\leq i\leq k$. Since $\OK$ is a Dedekind domain, the number of prime ideals $\mathfrak{q}$ such that $I_\ell^{(i)}\subseteq\mathfrak{q}$ for some $1\leq i\leq k$ and some $1\leq\ell\leq m$ is finite. Let $\{\mathfrak{q}_j\}_{j=1}^t$ be all such $\mathfrak{q}$. Then $\OK^{s-1}\times\mathfrak{q}_j\times\OK^{m-s}$ is a prime ideal in $\OK^m$ for any $1\leq j\leq t$ and any $1\leq s\leq m$. By \eqref{giF}, there exists $b\in\OK^m$ such that $b\in\mathcal{M}_\mathfrak{B}\setminus\bigcup_{j=1}^t\bigcup_{s=1}^m\OK^{s-1}\times\mathfrak{q}_j\times\OK^{m-s}$. Let $\mathfrak{c}_{k+1}\in\mathfrak{B}$ be such that $b\in\mathfrak{c}_{k+1}$. Then 
\begin{equation}\label{c}
\mathfrak{c}_{k+1}\not\subseteq\bigcup_{j=1}^t\bigcup_{s=1}^m\OK^{s-1}\times\mathfrak{q}_j\times\OK^{m-s}.
\end{equation}
We have $\mathfrak{c}_{k+1}=I_1^{(k+1)}\times\ldots\times I_m^{(k+1)}$ for some non-zero ideals $I_1^{(k+1)},\ldots,I_m^{(k+1)}\subseteq\OK$. By \eqref{c}, we have $I_\ell^{(k+1)}\not\subseteq\mathfrak{q}_j$ for any $1\leq\ell\leq m$ and any $1\leq j\leq t$. So $\mathfrak{c}_{k+1}$ is coprime with each of $\OK^{s-1}\times\mathfrak{q}_j\times\OK^{m-s}$, $j=1,\ldots,t$, $s=1,\ldots,m$. Hence $\mathfrak{c}_{k+1}$ is also coprime with each of $\mathfrak{c}_1,\ldots,\mathfrak{c}_k$.

\eqref{giG} $\implies$ \eqref{giC}. 
By Theorem \ref{pr:4}, Remark \ref{syndetic} and Lemma \ref{syndetic1}, we need to show
$$W_{\eta,F_n}=\{a\in\OK^m\ :\ S_{a} \eta|_{F_n}\equiv0\}=\{a\in\OK^m\ :\ \eta|_{a+F_n}\equiv0\}=\{a\in\OK^m\ : \ a+F_n\subseteq\mathcal{M}_\mathfrak{B}\}$$
is syndetic for any $n\in\N$. Let $\{I_i\}_{i\geq1}\subseteq\mathfrak{B}$ be infinite pairwise coprime and $F_n:=\{f_1,\ldots, f_s\}$, where $|F_n|=s$. By the Chinese Remainder Theorem applied to $I_1,\ldots,I_s$ and $-f_1,\ldots,-f_s$, there exists $a\in\OK^m$ such that $a\equiv-f_i\ \bmod I_i$ for any $1\leq i\leq s$. It follows that $a+F_n\subseteq\mathcal{M}_\mathfrak{B}$. Therefore, $W_{\eta,F_n}\neq\emptyset$. In view of Lemma \ref{d->a}, it follows that $W_{\eta,F_n}$ is syndetic.

\eqref{giD} $\implies$ \eqref{giH}
Suppose that \eqref{giD} holds and \eqref{giH} does not hold. Then there exist $a\in\OK^m$ and an ideal $I\subseteq\OK^m$ with finite index such that $I+a\subseteq\mathcal{F}_\mathfrak{B}$. Let $(F_n)_{n\geq1}$ be a nested F\o lner sequence in $\OK^m$. By \eqref{giD}, for any $n\geq1$ there exists $a_n\in\OK^m$ such that 
\begin{equation}\label{sub}
 F_n+a_n\subseteq\mathcal{M}_\mathfrak{B}.
\end{equation} By Lemma \ref{Folner_prod}, for sufficiently large $n\geq1$, we have $(F_n-a+a_n)\cap I\neq\emptyset$. Since $I+a\subseteq\mathcal{F}_{\mathfrak{B}}$, this contradicts \eqref{sub}.

\eqref{giH} $\implies$ \eqref{giF}.
Suppose that \eqref{giF} does not hold and let $I_1,\dots,I_k\subseteq\OK^m$ be proper ideals with finite indices such that 
\begin{equation}\label{I}
\mathcal{M}_\mathfrak{B}\subseteq\bigcup_{j=1}^k I_j.
\end{equation} Let $M:=\bigcap_{j=1}^k I_j$ and $\underline{1}:=(\underbrace{1,\ldots,1}_m)$. We claim that $M+\underline{1}\subseteq \mathcal{F}_\mathfrak{B}$.
Indeed, suppose that $(M+\underline{1})\cap\mathcal{M}_\mathfrak{B}\neq\emptyset$. Then, there are $a\in M$ and $i\geq1$ such that $a+\underline{1}\in\mathfrak{b}_i$. By \eqref{I}, there is $1\leq j\leq k$ such that $a+\underline{1}\in I_j$. Since $a\in M\subseteq I_j$, it follows $\underline{1}=(a+\underline{1})-a\in I_j$. Therefore, $I_j=\OK^m$, which contradicts the choice of $I_1,\ldots,I_k$.

\eqref{giD}$\iff$\eqref{gdens}
Notice that $\mathbf{0}\in X_\eta$ if and only if for any finite $F\subseteq\OK^m$ there exists $x\in\OK^m$ such that $\eta_{|F+x}\equiv0$. The assertion follows from the definition of $\eta$ and Lemma \ref{cor_Joel}.
\end{proof}
\section{Proximality of $(X_\eta,(S_\mathbf{n})_{\mathbf{n}\in\Z^m})$}
\begin{Remark}\label{product}
Let $\{\Lambda_i\}_{i\geq1}$ be as in Corollary \ref{rectangular}, then  $\{\Lambda_i\}_{i\geq1}$ contains an infinite pairwise coprime subset $\{\Lambda_{i_k}\}_{k\geq1}$ precisely if $(a_j^{(i_k)})_{k\geq1}$ are pairwise coprime and for some $1\leq j_0\leq m$, $\{a_{j_0}^{(i_k)}\}_{k\geq1}$ is infinite. Indeed, if all $\{a_j^{(i_k)}\}_{k\geq1}$ were finite then $\{\Lambda_{i_k}\}_{k\geq1}$ would be finite, too.
\end{Remark}
\begin{Remark}\label{no-imply}
Given $\eta_j=\raz_{\Z\setminus\bigcup_{i\geq1}a_j^{(i)}\Z}$, $j=1,2$ and the corresponding orbit closures $X_{\eta_j}\subseteq\{0,1\}^\Z$, it is natural to study the product $\Z^2$-action $(X_{\eta_1}\times X_{\eta_2},(\widetilde{S}_{\mathbf{n}})_{\mathbf{n}\in\Z^2})$, where $\widetilde{S}_{(n,m)}(x,y)=(S^nx,S^my)$, $n,m\in\Z$. By the definition of the product $\Z^2$-action and by Theorem \ref{one-dim}, the following conditions are equivalent:
\begin{itemize}
\item $(X_{\eta_1}\times X_{\eta_2},(\widetilde{S}_{\mathbf{n}})_{\mathbf{n}\in\Z^2})$ is proximal,
\item $(X_{\eta_j},S)$ for $j=1,2$ is proximal,
\item for $j=1,2$ we have $\{a_j^{(i)}\}_{i\geq1}$ contains an infinite pairwise coprime subset.
\end{itemize}
On the other hand, in view of Remark \ref{product}, for $\eta=\raz_{\Z^2\setminus\bigcup_{i\geq1}a_1^{(i)}\Z\times a_2^{(i)}\Z}$, the following are equivalent:
\begin{itemize}
\item $(X_\eta,(S_\n)_{\n\in\Z^2})$ is proximal,
\item there exists $(i_k)_{k\geq1}$ such that $\{a_j^{(i_k)}\}_{k\geq1}$ is pairwise coprime for $j=1,2$ with $\{a_{j_0}^{(i_k)}\}_{k\geq1}$ infinite for some $j_0\in\{1,2\}$. 
\end{itemize}
Clearly, this shows that the proximality of the two $\Z^2$-actions $(X_{\eta_1}\times X_{\eta_2},(\widetilde{S}_{\mathbf{n}})_{\mathbf{n}\in\Z^2})$ and $(X_\eta,(S_\n)_{\n\in\Z^2})$ are independent of one another (there is no implication in either direction). 
\end{Remark}
\begin{Remark}
Recently, Baake, Huck and Strungaru \cite{MB-CH-NS} studied the maximal density of weak model sets given by a pairwise coprime family of sublattices $\{\Lambda_i\}_{i\geq1}$ of a lattice $\Lambda\subset\R^m$ (a subgroup of the additive group $\R^m$ which is isomorphic to the additive group $\Z^m$) with the (absolute) convergence condition $\sum_{i\geq1}\frac{1}{[\Lambda:\Lambda_i]}<\infty$ and $\Lambda_F+\Lambda_{F'}=\Lambda_{F\cap F'}$ for all finite $F,F'\subseteq\N$, where $\Lambda_F:=\bigcap_{n\in F}\Lambda_n$ and $\Lambda_\emptyset=\Lambda$. Notice that the third condition, i.e., $\Lambda_F+\Lambda_{F'}=\Lambda_{F\cap F'}$ for all finite $F,F'\subseteq\N$, holds for lattices that are ideals. In particular, the characterization of maximal natural density of the weak model set from~\cite{MB-CH-NS} can be applied to pairwise coprime lattices in the same form as in Corollary \ref{rectangular} and satisfying the convergence condition.
\end{Remark}
Let us now state the ,,lattice analogue'' of Theorem \ref{ideals_general}:
\begin{Th}\label{general_lattices} Suppose that $\Lambda_i$ is a lattice in $\Z^m$, $i\geq1$. Consider the following conditions:
\begin{enumerate}[(a)]
\item $(X_\eta,(S_{\mathbf{n}})_{\mathbf{n}\in\Z^m})$ is proximal,\label{la}
\item $\mathbf{0}\in X_\eta$, where $\mathbf{0}_\mathbf{n}=0$ for any $\mathbf{n}\in\Z^m$,\label{lb}
\item for any lattices $\widetilde{\Lambda}_1,\widetilde{\Lambda}_2,\ldots, \widetilde{\Lambda}_k$ such that $\bigcup_{j=1}^k\widetilde{\Lambda}_j\neq\Z^m$, we have $\mathcal{M}_\mathscr{B}\not\subseteq\bigcup_{j=1}^k \widetilde{\Lambda}_j$,\label{lc}
\item $\{\Lambda_i\}_{i\geq1}$ contains an infinite pairwise coprime subset,\label{ld}
\item for any $\mathbf{n}\in\Z^m$ and any lattice $\Lambda\subseteq\Z^m$ we have $\mathbf{n}+\Lambda\not\subseteq\mathcal{F}_\mathscr{B}$,\label{le}
\item $d^*(\mathcal{M}_\mathscr{B})=1$.\label{lf}
\end{enumerate}  
Then: \eqref{ld}$\implies$\eqref{la}$\iff$\eqref{lb}$\iff$\eqref{lc}$\iff$\eqref{le}$\iff$\eqref{lf}.
\end{Th}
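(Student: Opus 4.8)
\emph{Strategy.} The plan is to run the proof of Theorem~\ref{ideals_general} with ``ideal'' replaced by ``finite--index subgroup'' wherever the argument still works, and to repair the one place where it does not: in the lattice setting the analogue of \eqref{giF}$\implies$\eqref{giG} is false, so condition \eqref{ld} can no longer be used to close the cycle. Write $n_i:=[\Z^m:\Lambda_i]$; for the non--trivial direction we may assume all $\Lambda_i$ are proper, since otherwise $\mathcal M_\mathscr B=\Z^m$ and every assertion is immediate.

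\emph{The routine arrows.} \eqref{la}$\implies$\eqref{lb} is \eqref{giC}$\implies$\eqref{giD} word for word: by Lemma~\ref{fixed} a proximal $X_\eta$ has a unique fixed point, which is $\mathbf 0$ or $\mathbf 1$, and $\mathbf 1\in X_\eta$ is ruled out by Lemma~\ref{Folner_prod}, since a shifted large F\o lner block always meets the proper subgroup $\Lambda_1\subseteq\mathcal M_\mathscr B$. For \eqref{lb}$\implies$\eqref{la} I would not go through \eqref{ld}: from $\mathbf 0\in X_\eta$ one gets $W_{\eta,F_n}\neq\emptyset$ for all $n$, Lemma~\ref{d->a} (stated precisely for collections of finite--index subgroups, with $A=\mathrm{id}$) makes each $W_{\eta,F_n}$ syndetic, Lemma~\ref{syndetic1} (recall $\eta$ is transitive in $X_\eta$) passes this to every $x\in X_\eta$, and Remark~\ref{syndetic} together with Corollary~\ref{pr:4} applied at $x_0=\mathbf 0\in X_\eta$ gives proximality. \eqref{lb}$\iff$\eqref{lf} is \eqref{giD}$\iff$\eqref{gdens} verbatim, via Lemma~\ref{cor_Joel}. \eqref{lb}$\implies$\eqref{lc} and \eqref{lb}$\implies$\eqref{le} copy \eqref{giD}$\implies$\eqref{giF} and \eqref{giD}$\implies$\eqref{giH}: from a violating family $\widetilde\Lambda_1,\dots,\widetilde\Lambda_k$ (resp.\ a coset $\mathbf n+\Lambda\subseteq\mathcal F_\mathscr B$) set $I=\bigcap_j\widetilde\Lambda_j$ (a lattice, by Lemma~\ref{intersect}), pick a coset of $I$ inside $\Z^m\setminus\bigcup_j\widetilde\Lambda_j$ --- this is exactly where the extra hypothesis $\bigcup_j\widetilde\Lambda_j\neq\Z^m$ of \eqref{lc} is needed, a finite union of proper subgroups of $\Z^m$ being possibly all of $\Z^m$ when $m\geq2$ --- and contradict \eqref{lb} through Lemma~\ref{Folner_prod}. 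Conversely \eqref{le}$\implies$\eqref{lc} by contraposition: given $\mathcal M_\mathscr B\subseteq\bigcup_j\widetilde\Lambda_j\neq\Z^m$, pick $\mathbf n\notin\bigcup_j\widetilde\Lambda_j$ and $\Lambda=\bigcap_j\widetilde\Lambda_j$; if $\mathbf n+\lambda\in\widetilde\Lambda_j$ with $\lambda\in\Lambda\subseteq\widetilde\Lambda_j$ then $\mathbf n\in\widetilde\Lambda_j$, a contradiction, so $\mathbf n+\Lambda\subseteq\mathcal F_\mathscr B$. Finally \eqref{ld}$\implies$\eqref{la}: by Proposition~\ref{prime} an infinite pairwise coprime subfamily of $\{\Lambda_i\}$ yields an infinite pairwise coprime subset $\{n_{i_k}\}$ of indices, hence $\{n_{i_k}\Z^m\}_k$ is an infinite pairwise coprime family of proper finite--index ideals of $\Z^m$; Theorem~\ref{ideals_general} (applied with $\mathcal O_K=\Z$, using that \eqref{giG} implies \eqref{giD}) makes $\bigcup_k n_{i_k}\Z^m\subseteq\mathcal M_\mathscr B$ thick, so $\mathbf 0\in X_\eta$, and then \eqref{lb}$\implies$\eqref{la}.

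\emph{The hard arrow.} It remains to prove \eqref{lc}$\implies$\eqref{lb}, equivalently $\neg$\eqref{lb}$\implies\neg$\eqref{lc}, and this carries the real content of the theorem. Suppose $\mathbf 0\notin X_\eta$. If $\{n_i\}$ had an infinite pairwise coprime subset, the argument of the previous paragraph applied to $\{n_{i_k}\Z^m\}$ would already give $\mathbf 0\in X_\eta$; so it does not, and Theorem~\ref{one-dim} furnishes primes $q_1,\dots,q_s$ such that each $n_i$ is divisible by some $q_j$. Since a generating matrix of $\Lambda_i$ has determinant $\pm n_i\equiv0\pmod{q_j}$, the image of $\Lambda_i$ in $\Z^m/q_j\Z^m$ is a proper subgroup, so $\Lambda_i$ is contained in the preimage of a hyperplane, a proper subgroup of index $q_j$; by Lemma~\ref{index} there are only finitely many such subgroups for each $q_j$, so $\mathcal M_\mathscr B$ is contained in a finite union of proper finite--index subgroups.

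\emph{Main obstacle.} The step I expect to be the crux is upgrading this finite cover to one whose union is $\ne\Z^m$. This is where $\mathbf 0\notin X_\eta$ must be used essentially: $\mathcal F_\mathscr B\ne\emptyset$ then lets us fix $\mathbf p\in\mathcal F_\mathscr B$ and try to choose, for each $i$, an admissible prime $q_j\mid n_i$ and a hyperplane modulo $q_j$ through the image of $\Lambda_i$ whose preimage avoids $\mathbf p$; for a fixed $i$ this is possible exactly when $\mathbf p\notin\Lambda_i+q_j\Z^m$ for some admissible $q_j$. The remaining indices $i$ --- those for which every admissible hyperplane preimage through $\Lambda_i$ also contains $\mathbf p$ --- all lie in one of the finitely many hyperplane preimages $G\ni\mathbf p$, and one is led to repeat the whole reduction inside $G\cong\Z^m$ for the subfamily of such $\Lambda_i$; making this recursion terminate (or, alternatively, replacing it by a direct argument that $\mathcal M_\mathscr B$ failing to be thick forces the periodic sets $\Z^m\setminus\bigcup_{i\leq N}\Lambda_i$ to remain syndetic with a uniform constant and to stabilise away from $\mathbf 0$) is the delicate point. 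Once $\neg$\eqref{lb}$\implies\neg$\eqref{lc} is in hand the cycle closes: \eqref{lb}$\implies$\eqref{lc}$\implies$\eqref{lb} gives \eqref{lb}$\iff$\eqref{lc}, and \eqref{lb}$\implies$\eqref{le}$\implies$\eqref{lc}$\implies$\eqref{lb} gives \eqref{lb}$\iff$\eqref{le}, so \eqref{la}, \eqref{lb}, \eqref{lc}, \eqref{le}, \eqref{lf} are all equivalent, with \eqref{ld}$\implies$\eqref{la} established separately.
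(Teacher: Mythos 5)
Your treatment of the routine arrows is essentially correct and largely coincides with the paper's: \eqref{la}$\implies$\eqref{lb} via Lemmas \ref{fixed} and \ref{Folner_prod}, \eqref{lb}$\implies$\eqref{la} via Lemmas \ref{d->a} and \ref{syndetic1}, \eqref{lb}$\iff$\eqref{lf} via Lemma \ref{cor_Joel}, \eqref{lb}$\implies$\eqref{lc}, \eqref{lb}$\implies$\eqref{le}, and \eqref{ld}$\implies$\eqref{lb} via Proposition \ref{prime}, Remark \ref{square} and the Chinese Remainder Theorem are all as in the paper. Your \eqref{le}$\implies$\eqref{lc} (contrapositive: pick $\mathbf n\notin\bigcup_j\widetilde\Lambda_j$ and $\Lambda=\bigcap_j\widetilde\Lambda_j$) is also fine. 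But the cycle is not closed: everything you have proven gives \eqref{lb}$\implies$\eqref{lc}, \eqref{lb}$\implies$\eqref{le}, \eqref{le}$\implies$\eqref{lc}, and nothing returns from \eqref{lc} or \eqref{le} to \eqref{lb}. The arrow you label as ``the hard arrow'' is exactly the content of the theorem, and your sketch of it does not go through. Covering $\mathcal M_\mathscr B$ by finitely many proper finite-index subgroups (which your reduction via Theorem \ref{one-dim}, Smith normal form and Lemma \ref{index} does achieve) is not enough to negate \eqref{lc}, since for $m\geq2$ a finite union of proper subgroups can equal $\Z^m$ (the paper's own example: $(\Z\times2\Z)\cup(2\Z\times\Z)\cup((1,1)\Z+(0,2)\Z)=\Z^2$); and the recursion you propose to repair this is not shown to terminate, nor does it exploit the full strength of $\mathbf 0\notin X_\eta$ (you only use $\mathcal F_\mathscr B\neq\emptyset$).

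The paper closes the cycle differently, and in the opposite direction around it: it proves \eqref{lc}$\implies$\eqref{le}$\implies$\eqref{lb}. For \eqref{lc}$\implies$\eqref{le}, given a coset $\mathbf n+\Lambda\subseteq\mathcal F_\mathscr B$ one sets $\Lambda_i':=\Lambda_i+\Lambda$; each $\Lambda_i'$ has index dividing $[\Z^m:\Lambda]$, so by Lemma \ref{index} there are only finitely many distinct $\Lambda_i'$, their union contains $\mathcal M_\mathscr B$, and it misses $\mathbf n$ (if $\mathbf n=\lambda_i+\lambda$ then $\lambda_i\in\mathbf n+\Lambda\subseteq\mathcal F_\mathscr B$, absurd) --- this negates \eqref{lc}. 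The genuinely hard step is \eqref{le}$\implies$\eqref{lb}: assuming $\mathbf 0\notin X_\eta$, take the \emph{largest} $s$ with $\mathbf n+F_s\subseteq\mathcal M_\mathscr B$ for some $\mathbf n$, let $\Lambda$ be the intersection of lattices witnessing this, choose $\mathbf m\in\Lambda$ maximizing $\bigl|\bigl((F_{s+1}\setminus F_s)+\mathbf n+\mathbf m\bigr)\cap\mathcal M_\mathscr B\bigr|$, and deduce that membership in $\mathcal M_\mathscr B$ restricted to $F_{s+1}+\mathbf n+\mathbf m$ is periodic under a further finite-index $\Lambda'$; since maximality of $s$ forces some $v\in(F_{s+1}+\mathbf n+\mathbf m)\cap\mathcal F_\mathscr B$, the whole coset $v+\Lambda'$ lies in $\mathcal F_\mathscr B$, contradicting \eqref{le}. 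Some argument of this kind --- a double maximality producing a full $\mathcal F_\mathscr B$-coset from the mere failure of \eqref{lb} --- is what your proposal is missing.
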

\begin{proof}[Proof of Theorem \ref{general_lattices}]
The proof of \eqref{la} $\implies$ \eqref{lb} $\implies$ \eqref{lc} and \eqref{lb} $\iff$ \eqref{lf} goes along the same lines as in Theorem \ref{ideals_general} as they use tools from Section \ref{section3} valid in countable Abelian groups.

\eqref{lc} $\implies$ \eqref{le}
Suppose that $\mathbf{n}+\Lambda\subseteq\mathcal{F}_\mathscr{B}$ and consider $\Lambda'_i:=\Lambda_i+\Lambda$. We claim that $\bigcup_{i\geq1}\Lambda'_i=\bigcup_{i\in C}\Lambda'_i$, where $C$ is finite, and $\mathcal{M}_\mathscr{B}\subseteq\bigcup_{i\geq1}\Lambda'_i$ with $\bigcup_{i\geq1}\Lambda'_i\neq\Z^m$ which will contradict \eqref{lc}. Indeed, it follows by Lemma \ref{intersect}, that $[\Z^m:\Lambda]=[\Z^m:\Lambda_i']\cdot[\Lambda_i':\Lambda]$, i.e., $[\Z^m:\Lambda'_i]\mid[\Z^m:\Lambda]$. Therefore, using Lemma \ref{index}, $|\{\Lambda'_i\ ; i\geq1\}|=:|C|<\infty$. Let $\{\Lambda'_i\ ; i\geq1\}=\{\widetilde{\Lambda_i} \ ; i\in C\}$. We claim that $\mathbf{n}\not\in\bigcup_{i\geq1}\Lambda'_i$. Indeed, if $\mathbf{n}\in\Lambda_i'$ for some $i\geq1$, then $\mathbf{n}=\lambda_i+\lambda$, where $\lambda_i\in\Lambda_i$, $\lambda\in\Lambda$. This yields $\lambda_i=\mathbf{n}-\lambda\in \mathbf{n}+\Lambda\subseteq\mathcal{F}_{\mathscr{B}}$ and on the other hand $\lambda_i\in\mathcal{M}_\mathscr{B}$, which is impossible. In particular, $\bigcup_{i\in C}\Lambda'_i\neq\Z^m$, which completes the proof.

\eqref{le} $\implies$ \eqref{lb}
Suppose that \eqref{lb} does not hold. Let $(F_n)_{n\in\N}$ be a nested F\o lner sequence in $\Z^m$ such that $F_1=\{(\underbrace{0,\ldots,0}_m)\}$. Then clearly $F_1\subseteq\mathcal{M}_\mathscr{B}$. Let $s\in\N$ be the largest integer such that for some $\mathbf{n}\in\Z^m$, 
\begin{equation}\label{n}
\mathbf{n}+F_s\subseteq\mathcal{M}_\mathscr{B}
\end{equation}
(such an $s\in\N$ exists since otherwise we would have $\mathbf{0}\in X_\eta$). Fix $\mathbf{n}\in\Z^m$ such that \eqref{n} holds. For each $t\in\mathbf{n}+F_s$ choose $i_t\geq1$ with $t\in\Lambda_{i_t}$ and set $\Lambda:=\bigcap_{t\in\mathbf{n}+F_s}\Lambda_{i_t}$. Then $\bigcup_{t\in\mathbf{n}+F_s}\Lambda_{i_t}+\bigcap_{t\in\mathbf{n}+F_s}\Lambda_{i_t}\subseteq\bigcup_{t\in\mathbf{n}+F_s}\left(\Lambda_{i_t}+\bigcap_{t'\in\mathbf{n}+F_s}\Lambda_{i_{t'}}\right)\subseteq\bigcup_{t\in\mathbf{n}+F_s}\Lambda_{i_t}$. So  
\begin{equation}\label{(2)}
\mathbf{n}+F_s+\Lambda\subseteq\mathcal{M}_\mathscr{B}.
\end{equation}
By the definition of $s$, we have $\left(F_{s+1}+\mathbf{n}\right)\cap\mathcal{M}_\sB\neq\emptyset$. Let $\mathbf{m}\in\Lambda$ be such that the cardinality of $((F_{s+1}\setminus F_s)+\mathbf{n}+\mathbf{m})\cap\mathcal{M}_\mathscr{B}$ is maximal. For each $u\in((F_{s+1}\setminus F_s)+\mathbf{n}+\mathbf{m})\cap\mathcal{M}_\mathscr{B}$ choose $j_u\geq1$ with $u\in\Lambda_{j_u}$ and set $\Lambda':=\Lambda\cap\bigcap_{u\in((F_{s+1}\setminus F_s)+\mathbf{n}+\mathbf{m})\cap\mathcal{M}_\mathscr{B}}\Lambda_{j_u}$. Similarly as \eqref{(2)}, we show 
\begin{equation}\label{(3')}
\left(\left((F_{s+1}\setminus F_s)+\mathbf{n}+\mathbf{m}\right)\cap\mathcal{M}_\mathscr{B}\right)+\Lambda'\subseteq\mathcal{M}_\mathscr{B}.
\end{equation}
 Since $F_{s+1}=(F_{s+1}\setminus F_s)\cup F_s$ and $\Lambda'\subseteq\Lambda$, it follows by \eqref{(2)} and \eqref{(3')} that \begin{equation}\label{(3)}
 \left((F_{s+1}+\mathbf{n}+\mathbf{m})\cap\mathcal{M}_\mathscr{B}\right)+\Lambda'\subseteq\mathcal{M}_\mathscr{B}
\end{equation}
We claim that in fact for $v\in F_{s+1}+\mathbf{n}+\mathbf{m}$ and $\lambda'\in\Lambda'$,
\begin{equation}\label{(4)}
v\in\mathcal{M}_\mathscr{B} \iff v+\lambda'\in\mathcal{M}_\mathscr{B}. 
\end{equation}
If this is not true then for some $v\in F_{s+1}+\mathbf{n}+\mathbf{m}$ and $\lambda'\in\Lambda'$ we have $v+\lambda'\in\mathcal{M}_\mathscr{B}$ and $v\not\in\mathcal{M}_\mathscr{B}$. Since $\mathbf{m}\in\Lambda'\subseteq\Lambda$, by \eqref{(2)}, we have $v\not\in F_s+\mathbf{n}+\mathbf{m}$. Therefore, in view of \eqref{(3)} and \eqref{(4)}, the cardinality of $\left((F_{s+1}\setminus F_s)+\mathbf{n}+\mathbf{m}+\lambda'\right)\cap\mathcal{M}_\mathscr{B}$ is larger than the cardinality of $\left((F_{s+1}\setminus F_s)+\mathbf{n}+\mathbf{m}\right)\cap\mathcal{M}_\mathscr{B}$ which contradicts the choice of $\mathbf{m}$ (as $\mathbf{m}+\lambda'\in\Lambda+\Lambda'\subseteq\Lambda$). Now, it suffices to take $v\in (F_{s+1}+\mathbf{m}+\mathbf{n})\cap\mathcal{F}_\mathscr{B}$ (this set is non-empty by the choice of $s$) and use \eqref{(4)} to see that $v+\Lambda'\subseteq\mathcal{F}_\mathscr{B}$.

\eqref{lb} $\implies$ \eqref{la}
Let $(F_n)_{n\geq1}$ be a nested F\o lner sequence in $\Z^m$.
Similarly as in the proof of \eqref{giG}$\implies$\eqref{giC} in Theorem \ref{ideals_general}, it is enough to show that $W_{\eta,F_n}=\{\mathbf{n}\in\Z^m\ :\ \eta|_{\mathbf{n}+F_n}\equiv0\}$ is non-empty. This follows directly from $\mathbf{0}\in X_\eta$. 

\eqref{ld} $\implies$ \eqref{lb}
Let $(i_k)_{k\geq1}$ be such that $\{\Lambda_{i_k}\}_{k\geq1}$ is infinite and pairwise coprime. By Proposition \ref{prime}, passing to a subsequence if necessary, we may assume that $\{[\Z^m:\Lambda_{i_k}]\}_{k\geq1}$ is pairwise coprime. Clearly, $d_{i_k}\Z^m$, where $d_{i_k}=[\Z^m:\Lambda_{i_k}]$, is an ideal in $\Z^m$ (considered with coordinatewise multiplication). Since $\{d_{i_k}\}_{k\geq1}$ are pairwise coprime, we can apply the Chinese Remainder Theorem to each choice of $a_1, a_2, \ldots, a_k\in\Z^m$ and ideals $d_{i_1}\Z^m,\ldots,d_{i_k}\Z^m$. Then there exists $x\in\Z^m$ with $x-a_\ell\in d_{i_\ell}\Z^m$ for any $1\leq \ell \leq k$. By Remark \ref{square}, we get $d_{i_\ell}\Z^m\subseteq\Lambda_{i_\ell}$. Hence $x-a_\ell\in d_{i_\ell}\Z^m\subseteq\Lambda_{i_\ell}\subseteq\mathcal{M}_\mathscr{B}$. So, we have that $\eta_{x-a_\ell}=0$ for any $1\leq\ell\leq k$, which gives $\mathbf{0}\in X_\eta$.
\end{proof}
\section{Examples}
\begin{Remark}
The assumption $\bigcup_{j=1}^k\widetilde{\Lambda}_j\neq\Z^m$ is necessary in \eqref{lc} above -- we can have $\bigcup_{j=1}^k\widetilde{\Lambda}_j=\Z^m$ with $\widetilde{\Lambda}_j\neq\Z^m$, $1\leq j\leq k$ already for $m=2$: 
$$(\Z\times2\Z)\cup(2\Z\times\Z)\cup((1,1)\Z+(0,2)\Z)=\Z^2.$$
On the other hand, suppose that for proper ideals $I_j\subseteq\OK^m$, $1\leq j\leq k$, with finite indices, we have $\bigcup_{j=1}^k I_j=\OK^m$. For $1\leq j\leq k$, choose a prime ideal $\mathfrak{p}_j\supseteq I_j$. Then by Lemma~\ref{avoid}, $\bigcup_{j=1}^k\mathfrak{p}_j=\OK^m\subseteq \mathfrak{p}_{j_0}\nsubseteq\OK^m$, which is impossible.
\end{Remark}
\begin{Example}[\eqref{lb} $\centernot\implies$ \eqref{ld} in Theorem \ref{general_lattices}]\label{ex2}
Let $\mathscr{B}=\{\Lambda_i\}_{i\geq1}$, where
$\Lambda_1=2\mathbb{Z}\times\mathbb{Z}$, $\Lambda_2=\mathbb{Z}\times2\mathbb{Z}$, $\Lambda_i=(1,1)\mathbb{Z}+(0,2p_i)\mathbb{Z}$, $i\geq3$ and $\{p_i\}_{i\geq3}$ is the set of all primes. Clearly, $\mathcal{F}_\mathscr{B}\subseteq(2\Z+1)\times(2\Z+1)$. Moreover, $(n,m)\in\left((2\Z+1)\times(2\Z+1)\right)\cap\mathcal{M}_\mathscr{B}$ precesily when $2\ndivides n$ and $2p_i\divides m-n$ for some $i\geq3$. Equivalently, $2\ndivides n,m$ and $m-n\neq\pm2$. Thus, \begin{center}
\includegraphics[scale=0.5]{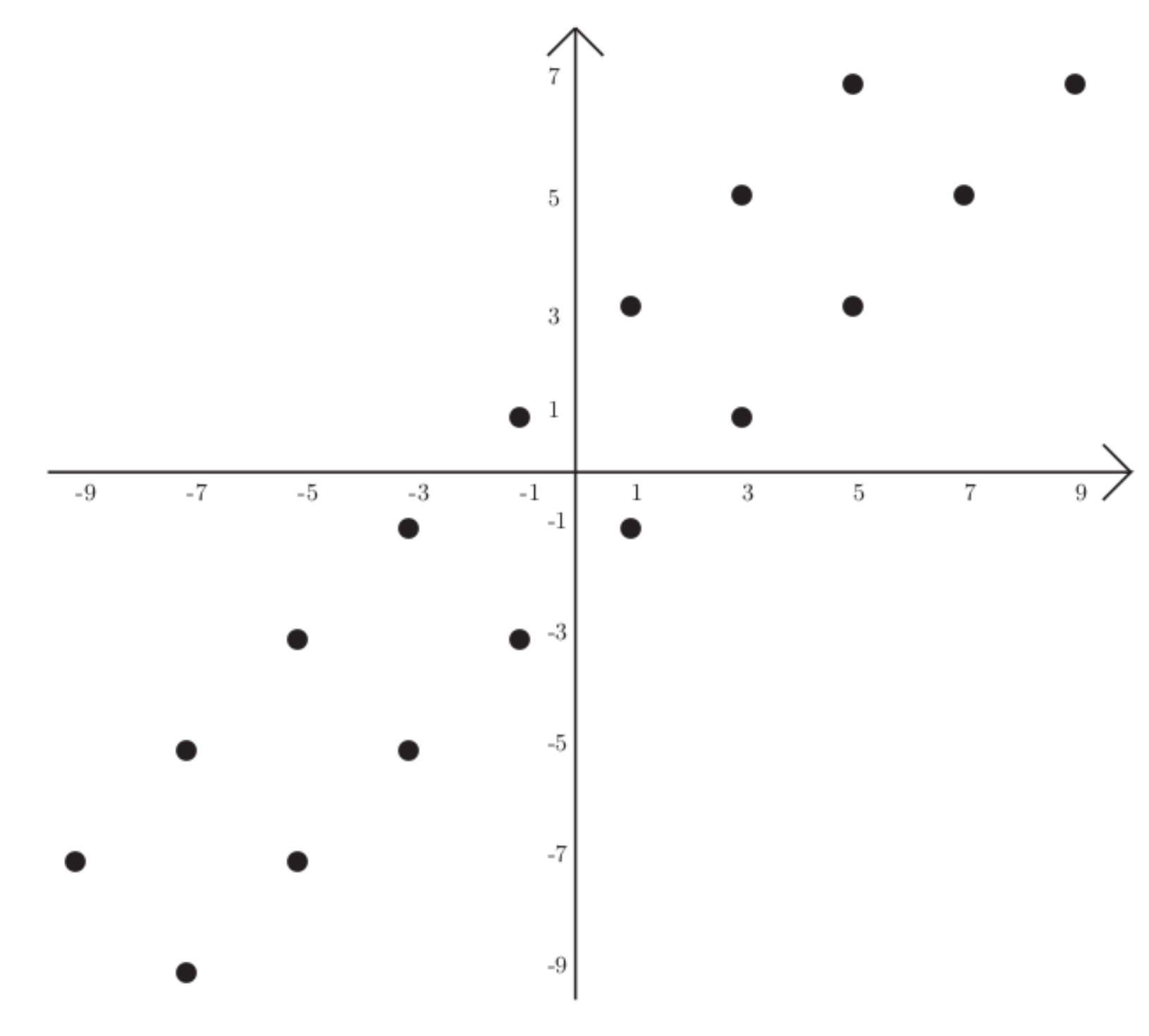}

$\mathcal{F}_\mathscr{B}=\{(2k+1,2k-1), (2k-1,2k+1)\ : \ k\in\Z\}$.
\end{center}
It follows immediately that $\mathbf{0}\in X_\eta$. Moreover, since $\Lambda_i\subseteq2\Z\times2\Z\cup(2\Z+1)\times(2\Z+1)$ for any $i\geq3$, we have $\Lambda_i+\Lambda_j\subseteq2\Z\times2\Z\cup(2\Z+1)\times(2\Z+1)$ and $\mathscr{B}$ does not contain an infinite pairwise coprime subset. 
\end{Example}
\begin{Remark}
The conditions in Theorem \ref{one-dim} are equivalent to
\begin{equation}\label{d}\tag{d'}
\text{ there exists an infinite pairwise coprime set }\{a_i\}_{i\geq1} \text{ such that }\bigcup_{i\geq1}a_i\Z\subseteq\mathcal{M}_\mathscr{B}. 
\end{equation}
Clearly \ref{(d)} $\implies$ \eqref{d}. Now, suppose that \eqref{d} holds. Then, for any $i\geq1$, there exists $j_i$ with $b_{j_i}\divides a_i$. Clearly, $\{b_{j_i}\}_{i\geq1}$ is again infinite and pairwise coprime. 

In case of lattices, \eqref{d} takes the following form:
\begin{equation}\label{d_lat}\tag{d'}
\text{ there exists an infinite pairwise coprime set }\{\widetilde{\Lambda}_i\}_{i\geq1} \text{ with }\bigcup_{i\geq1}\widetilde{\Lambda}_i\subseteq\mathcal{M}_\mathscr{B}.
\end{equation}
The following question arises:
\begin{Question}\label{que}
\begin{enumerate}[(A)]
\item Is \eqref{d_lat} equivalent to \eqref{ld} in Theorem \ref{general_lattices}?\label{QA}
\item Is \eqref{d_lat} equivalent to \eqref{la} in Theorem \ref{general_lattices}?\label{QB}
\end{enumerate}
\end{Question}
Notice that we have $\bigcup_{i\geq3}p_i\Z^2\subseteq\mathcal{M}_\mathscr{B}$ in Example \ref{ex2}. So \eqref{d_lat} holds but \eqref{ld} does not. Hence Question \ref{que} \eqref{QA} has the negative answer. 
\end{Remark}
To answer negatively Question \ref{que} \eqref{QB}, we will need the following:
\begin{Lemma}\label{rectan}
If $\{\Lambda_i=(a_i,b_i)\Z+(0,d_i)\Z\ : i\geq1\}$ is pairwise coprime then the projection of $(\{0\}\times\Z)\cap\mathcal{M}_\mathscr{B}$ onto the second coordinate contains an infinite pairwise coprime set.
\end{Lemma}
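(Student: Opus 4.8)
The plan is to extract, from the pairwise coprimeness of the lattices $\Lambda_i = (a_i,b_i)\Z + (0,d_i)\Z$, enough arithmetic information about the $d_i$'s to build an infinite pairwise coprime set among them, and then to locate that set concretely inside the second-coordinate projection of $(\{0\}\times\Z)\cap\mathcal{M}_\mathscr{B}$. First I would compute $(\{0\}\times\Z)\cap\Lambda_i$: an element $(0,k)$ lies in $\Lambda_i$ precisely when there are $r,s\in\Z$ with $ra_i=0$ and $rb_i+sd_i=k$. If $a_i\neq 0$ this forces $r=0$, so $(\{0\}\times\Z)\cap\Lambda_i = \{0\}\times d_i\Z$; if $a_i=0$ then $(a_i,b_i)$ and $(0,d_i)$ already lie in $\{0\}\times\Z$, and one checks $(\{0\}\times\Z)\cap\Lambda_i = \{0\}\times \gcd(b_i,d_i)\Z$. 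In either case the projection onto the second coordinate of $(\{0\}\times\Z)\cap\Lambda_i$ is $e_i\Z$ for some $e_i\geq 1$, with $e_i \mid d_i$. Hence the projection of $(\{0\}\times\Z)\cap\mathcal{M}_\mathscr{B}$ is $\bigcup_{i\geq 1} e_i\Z$, and it suffices to show that $\{e_i\}_{i\geq1}$ contains an infinite pairwise coprime set.

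Next I would feed $\{e_i\Z\}_{i\geq1}$ into Theorem~\ref{one-dim}: either $\{e_i\}_{i\geq1}$ contains an infinite pairwise coprime subset and we are done, or, by the equivalence \eqref{(a)}$\iff$\eqref{(c)}, there exist $q_1,\dots,q_n>1$ (which, as in the proof of Proposition~\ref{prime}, may be taken to be primes) with $e_i\Z \subseteq \bigcup_{t=1}^n q_t\Z$ for all $i$, i.e.\ every $e_i$ is divisible by some $q_t$. Aiming for a contradiction, fix a prime $p=q_t$ dividing infinitely many $e_i$, hence (since $e_i\mid d_i$) dividing infinitely many $d_i$, say for $i$ in an infinite set $J$. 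The goal is to show that any two $\Lambda_i,\Lambda_j$ with $i,j\in J$ fail to be coprime, contradicting the hypothesis; since $\Z^2$-coprimeness of $\Lambda_i+\Lambda_j=\Z^2$ is an honest obstruction, I expect this to be the main step. Concretely, $\Lambda_i+\Lambda_j$ is generated by $(a_i,b_i),(0,d_i),(a_j,b_j),(0,d_j)$; reducing modulo $p$ and using $p\mid d_i,d_j$, the image of $\Lambda_i+\Lambda_j$ in $(\Z/p\Z)^2$ is spanned by $(\bar a_i,\bar b_i)$ and $(\bar a_j,\bar b_j)$. If this image were all of $(\Z/p\Z)^2$ then in particular $(0,1)$ would be hit, i.e.\ $\Lambda_i+\Lambda_j$ would contain an element $\equiv(0,1)\bmod p$; but any element of $\Lambda_i+\Lambda_j$ of the form $(0,*)$ has second coordinate in $e_i\Z+e_j\Z$, and one refines the bookkeeping (passing, if needed, to a further infinite subset of $J$ on which the residues $\bar a_i,\bar b_i\bmod p$ are constant, exactly as in the proof of Proposition~\ref{prime}) to conclude that $p$ divides the second coordinate of every element of $(\{0\}\times\Z)\cap(\Lambda_i+\Lambda_j)$, so $(0,1)\notin\Lambda_i+\Lambda_j$ and $\Lambda_i+\Lambda_j\subsetneq\Z^2$.

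The one point requiring care is the case $a_i=0$ (where $e_i=\gcd(b_i,d_i)$ rather than $d_i$): there one must also track divisibility of $b_i$ by $p$, but if $p\mid e_i$ and $a_i=0$ then $p\mid b_i$ as well, so the generator $(a_i,b_i)=(0,b_i)$ reduces to $(0,0)$ mod $p$ and contributes nothing to the mod-$p$ span, which only helps. Thus after the reduction the second coordinates of all $(0,*)$-elements of $\Lambda_i+\Lambda_j$ lie in $p\Z$, giving the contradiction. Therefore $\{e_i\}_{i\geq1}$ contains an infinite pairwise coprime set, and since each $e_i\Z$ is contained in the second-coordinate projection of $(\{0\}\times\Z)\cap\mathcal{M}_\mathscr{B}$, so does that projection, completing the proof.
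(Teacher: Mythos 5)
Your proposal is correct in substance but takes a genuinely different, and much longer, route than the paper. The paper's proof is two lines: since $a_i\cdot(0,d_i)=(0,a_id_i)\in\Lambda_i$, the projection in question contains every $a_id_i=\pm[\Z^2:\Lambda_i]$, and Proposition~\ref{prime} (proved earlier for exactly this purpose) says that the indices of an infinite pairwise coprime family of proper lattices contain an infinite pairwise coprime subset. You instead compute the projection exactly --- it equals $\bigcup_{i\geq1} d_i\Z$, since $(\{0\}\times\Z)\cap\Lambda_i=\{0\}\times d_i\Z$ when $a_i\neq0$ (and $a_i=0$ is impossible for a finite-index subgroup of $\Z^2$, so that case is vacuous) --- and then prove the stronger fact that $\{d_i\}_{i\geq1}$ itself contains an infinite pairwise coprime subset, by rerunning a mod-$p$ pigeonhole argument in the spirit of the proof of Proposition~\ref{prime}: if a prime $p$ divided $d_i$ for all $i$ in an infinite set $J$, then after passing to an infinite subset of $J$ on which $(a_i,b_i)\bmod p$ is constant, the image of $\Lambda_i+\Lambda_j$ in $(\Z/p\Z)^2$ is spanned by a single vector, hence is a proper subgroup, contradicting $\Lambda_i+\Lambda_j=\Z^2$. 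That core argument is sound; one intermediate sentence of yours is off, namely an element of $\Lambda_i+\Lambda_j$ congruent to $(0,1)$ modulo $p$ need not have first coordinate $0$, so it cannot be tested against $(\{0\}\times\Z)\cap(\Lambda_i+\Lambda_j)$, and likewise your final claim that $p$ divides the second coordinate of every such element can fail when $p\mid a_i$ --- but in that case $p$ divides every first coordinate instead, and either way the clean conclusion is simply that the mod-$p$ image is proper. In the end your approach buys a sharper statement (coprimeness among the $d_i$'s themselves rather than among the multiples $a_id_i$, and an exact description of the projection), at the cost of essentially reproving a special case of Proposition~\ref{prime}; the paper's approach buys brevity by reusing that proposition verbatim.
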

\begin{proof}
It suffices to notice that $(0,a_id_i)\in\Lambda_i$. Moreover, by Proposition \ref{prime}, $\{|a_id_i|\}_{i\geq1}$ contains an infinite subset of pairwise coprime integers.
\end{proof}
Now, we are ready to consider the following: 
\begin{Example}\label{ex1}
Let $\mathscr{B}=\{\Lambda_i\}_{i\geq1}$, where 
\begin{align*}
\Lambda_1&=(1,1)\Z+(0,2)\Z,\\
\Lambda_2&=\Z\times2\Z,\\
\Lambda_{2i+1}&=(2p_i,1)\Z+(0,2)\Z, i\geq1,\\
\Lambda_{2i+2}&=(2^{i+1},1)\Z+(0,2)\Z, i\geq1,
\end{align*}
where $\{p_i\}_{i\geq1}$ is the set of all odd primes. 
We have $\Z^2\setminus(\Lambda_1\cup\Lambda_2)=2\Z\times(2\Z+1)$. Take $(n,m)\in\Z^2$ such that $2\ndivides m$ and $n=2^ar$, where $a\geq1$ and $2\ndivides r$. Then
\begin{itemize}
\item $(n,m)\in\bigcup_{i\geq1}\Lambda_{2i+2}\iff a\geq2$, \item $(n,m)\in\bigcup_{i\geq1}\Lambda_{2i+1}\iff a=1\text{ and } p_i\divides n\text{ for some }i\geq1$.
\end{itemize}
Therefore
\begin{equation}\label{free}
\mathcal{F}_\mathscr{B}=\Z^2\setminus\mathcal{M}_\mathscr{B}=\{-2,0,2\}\times(2\Z+1).
\end{equation} 
\begin{center}
\includegraphics[scale=0.7]{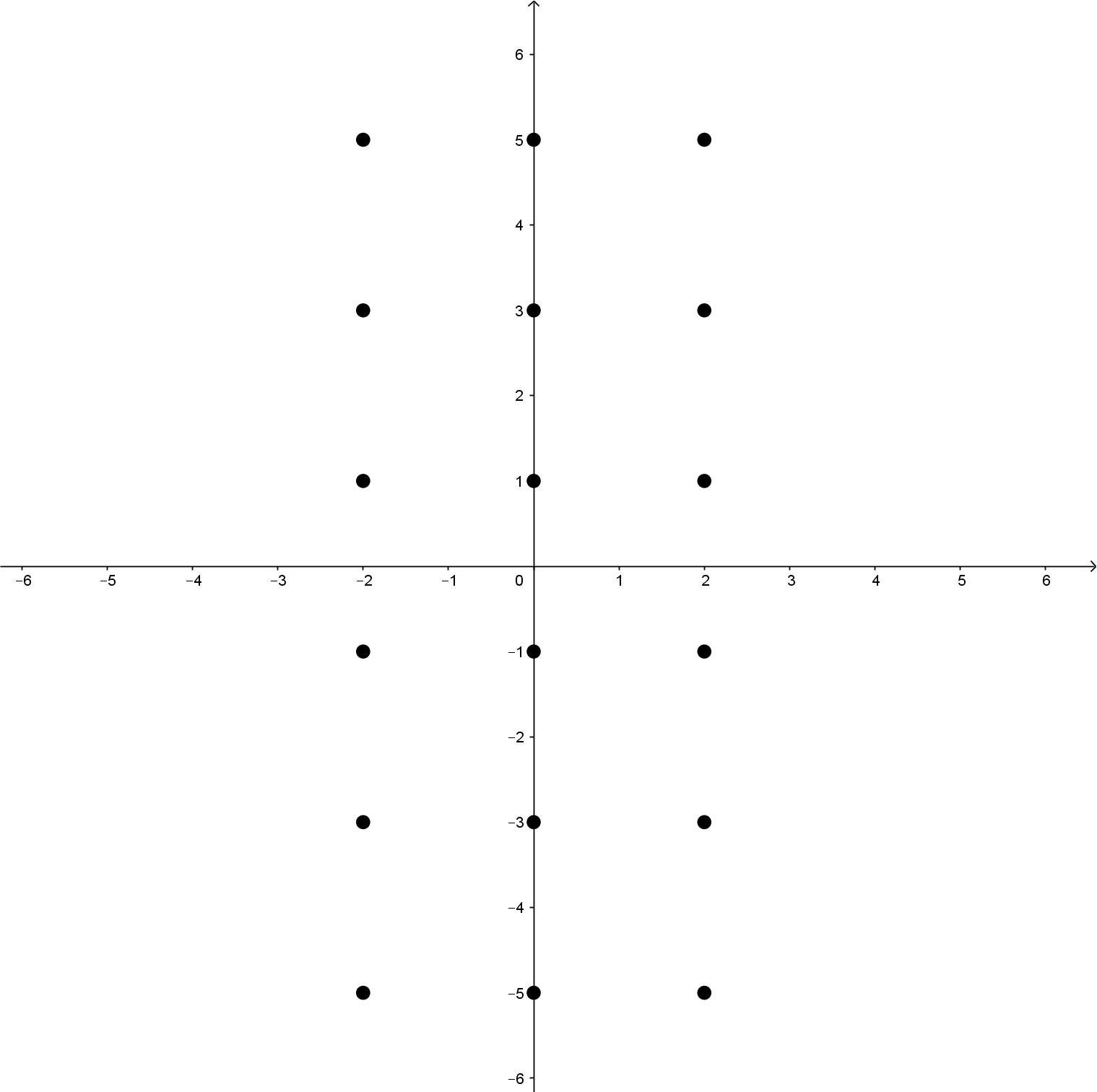}
\end{center}
It follows immediately that $\mathbf{0}\in X_\eta$, i.e., $(X_\eta,(S_\mathbf{n})_{\mathbf{n}\in\Z^2})$ is proximal.

Suppose now that
\begin{equation} \label{(*)}
\bigcup_{j\geq1}\widetilde{\Lambda}_j\subseteq\mathcal{M}_\mathscr{B},
\end{equation}
 with $\{\widetilde{\Lambda}_j\}_{j\geq1}$ pairwise coprime. By Lemma \ref{rectan}, the projection of $(\{0\}\times\Z)\cap\mathcal{M}_\mathscr{B}$ onto the second coordinate contains an infinite pairwise coprime set. However, $(\{0\}\times\Z)\cap\mathcal{M}_\mathscr{B}\subseteq\Z^2=\{0\}\times2\Z$, which is impossible in view of \eqref{(*)}. We conclude that \eqref{la} $\centernot\implies$ \eqref{d}.
\end{Example}
\begin{Remark}
Notice that for $(\Lambda_i)_{i\geq1}$ from Example \ref{ex1} we have $\bigcap_{i\geq1}\Lambda_i=\{(0,0)\}$. 
\end{Remark}
In the last example, we will show how to use Remark \ref{automorphism} to obtain an extension of Corollary \ref{rectangular}.
\begin{Example}
Let $m=2$, $k\in\Z$ and consider $\{(a_i,b_i)\Z+(0,d_i)\Z\}_{i\geq1}$, i.e., $(a_i,ka_i)\Z+(0,d_i)\Z=A(a_i\Z\times d_i\Z)$ for $A=\begin{pmatrix}1 & 0 \\ k & 1\end{pmatrix}$, $i\geq1$. By Theorem \ref{ideals_general} and Remark \ref{automorphism} the following are equivalent:
\begin{itemize}
\item $(X_\eta, (S_\mathbf{n})_{\mathbf{n}\in\Z^2})$ is proximal,
\item $(X_{\eta_A}, (S_\mathbf{n})_{\mathbf{n}\in\Z^2})$ is proximal,
\item $\{a_i\Z\times d_i\Z\}_{i\geq1}$ contains an infinite pairwise coprime set,
\item $\{(a_i,ka_i)\Z+(0,d_i)\Z\}_{i\geq1}$ contains an infinite pairwise coprime set
\end{itemize}
(the two latter conditions are equivalent as $A$ is a group isomorphism).
\end{Example}
We leave the following open:
\begin{Question} Can proximality of $\mathscr{B}$-free systems in general case of lattices be characterized by an arithmetic property of the family $\mathscr{B}=\{\Lambda_i\}_{i\geq1}$? By Example \ref{ex1}, such a property must be weaker than \eqref{d}.
\end{Question}

\bigskip
\footnotesize

\noindent
Aurelia Dymek\\
\textsc{Faculty of Mathematics and Computer Science, Nicolaus Copernicus University, Chopina 12/18, 87-100 Toru\'{n}, Poland}\par\nopagebreak
\noindent
\texttt{aurbart@mat.umk.pl}

\medskip
\end{document}